\numberwithin{equation}{section}
\newtheorem{theorem}{Theorem}[section]
\newtheorem{proposition}[theorem]{Proposition}
\newtheorem{lemma}[theorem]{Lemma}
\newtheorem{remark}[theorem]{Remark}
\newtheorem{example}[theorem]{Example}
\newtheorem{corollary}[theorem]{Corollary}
\newtheorem{definition}[theorem]{Definition}
\newcommand{\ud}{\mathrm{d}}
\newcommand{\RN}{\mathbb R^N}
\newcommand{\bt}{\begin{theorem}}
\newcommand{\et}{\end{theorem}}
\newcommand{\bl}{\begin{lemma}}
\newcommand{\el}{\end{lemma}}
\newcommand{\bd}{\begin{definition}}
\newcommand{\ed}{\end{definition}}
\newcommand{\bc}{\begin{corollary}}
\newcommand{\ec}{\end{corollary}}
\newcommand{\bp}{\begin{proof}}
\newcommand{\ep}{\end{proof}}
\newcommand{\bx}{\begin{example}}
\newcommand{\ex}{\end{example}}
\newcommand{\bi}{\begin{exercise}}
\newcommand{\ei}{\end{exercise}}
\newcommand{\bo}{\begin{proposition}}
\newcommand{\eo}{\end{proposition}}
\newcommand{\br}{\begin{remark}}
\newcommand{\er}{\end{remark}}
\newcommand{\be}{\begin{equation}}
\newcommand{\ee}{\end{equation}}
\newcommand{\ba}{\begin{align}}
\newcommand{\ea}{\end{align}}
\newcommand{\bn}{\begin{enumerate}}
\newcommand{\en}{\end{enumerate}}
\newcommand{\bg}{\begin{align*}}
\newcommand{\eg}{\end{align*}}
\newcommand{\bcs}{\begin{cases}}
\newcommand{\ecs}{\end{cases}}
\newcommand{\bean}{\begin{eqnarray*}}
\newcommand{\eean}{\end{eqnarray*}}
\newcommand{\iy}{\infty}
\newcommand{\s}{\section}
\newcommand{\pa}{\partial}
\newcommand{\R}{\mathbb R}
\newcommand{\rg}{\rightarrow}
\newcommand{\lab}{\label}
\renewcommand{\epsilon}{\varepsilon}
\title[Multiplicity of normalized solutions for fractional NLS]{ Multiplicity of normalized solutions for the \\ fractional Schr\"{o}dinger equation with potentials}
\author[X. Zhang]{Xue Zhang}
\author[M. Squassina]{Marco Squassina}
\author[J. J.\ Zhang]{Jianjun Zhang}
\address[X. Zhang]{\newline\indent College of Mathematica and Statistics
\newline\indent
Chongqing Jiaotong University
\newline\indent
Chongqing 400074, China}
\email{zhangxue@mails.cqjtu.edu.cn}
\address[Marco Squassina]{\newline\indent Dipartimento di Matematica e Fisica
	\newline\indent
	Universit\`a Cattolica del Sacro Cuore
	\newline\indent
	Via dei Musei 41, Brescia, Italy}
\email{marco.squassina@unicatt.it}
\address[J. J.\ Zhang]{\newline\indent College of Mathematica and Statistics
\newline\indent
Chongqing Jiaotong University
\newline\indent
Chongqing 400074, China}
\email{zhangjianjun09@tsinghua.org.cn}
\thanks{Marco Squassina is supported by Gruppo Nazionale per l'Analisi Ma\-te\-ma\-ti\-ca, la Probabilit\`a e le loro Applicazioni, while
Xue Zhang and Jianjun Zhang are supported by Joint Training Base Construction Project for Graduate Students in Chongqing (JDLHPYJD2021016).}
\subjclass[2000]{35A15 35B33 35Q55}
\date{}
\keywords{Fractional Laplacian, Normalized solution, Mass critical exponent}
\begin{document}

\begin{abstract}
We are concerned with the existence and multiplicity of normalized solutions to the fractional Schr\"{o}dinger equation
\begin{eqnarray*} \left\{
	\begin{array}{ll}
		(-\Delta)^su+V(\epsilon x)u=\lambda u+h(\epsilon x)f(u)&\mbox{in}\ \mathbb{R}^N,\\
		\displaystyle\int_{\mathbb{R}^N}|u|^2dx=a,
	\end{array}
	\right.
\end{eqnarray*}
where $(-\Delta)^s$ is the fractional Laplacian, $s\in(0,1)$, $a,\epsilon>0$, $\lambda\in\R$ is an unknown parameter that appears as a Lagrange multiplier,
$h:\mathbb{R}^N\rightarrow[0,+\infty)$ are bounded and continuous, and $f$ is $L^2$-subcritical. Under some assumptions on the potential $V$, we show that
the existence of normalized solutions depends on global maximum points of $h$ when $\epsilon$ is small enough.
\end{abstract}

\maketitle

\s{Introduction}
\renewcommand{\theequation}{1.\arabic{equation}}
\subsection{Background and motivation}
In this paper, we investigate the multiplicity of normalized solutions for the  fractional Schr\"{o}dinger equation
\begin{equation}\lab{11}
	i\frac{\partial \psi}{\partial t}=(-\Delta)^s\psi+V(x)\psi -g(|\psi|^2)\psi\ \ \rm in\  \R^N,
\end{equation}
where $0<s<1$, $ i$ denotes the imaginary unit and $\psi(x,t)$ is a complex wave. A solution of \eqref{11} is called a standing wave solution if it has the form $\psi(x,t)=e^{-i\lambda t}u(x)$ for some $\lambda\in\R$. $(-\Delta)^s$ stands for the fractional Laplacian and if $u$ is small enough, it can be computed by the following singular integral
$$
(-\Delta)^su=C(N,s){\rm{P.V.}} \int_{\RN}\frac{u(x)-u(y)}{|x-y|^{N+2s}}\ud y.
$$
Here the symbol $\rm{P.V.}$ is the Cauchy principal value and $C(N,s)$ is a suitable positive normalizing constant.

The operator $(-\Delta)^s$ can be seen as the infinitesimal generators of L\'{e}vy stable diffusion processes \cite{Appleb}, it originates from describing various phenomena in the field of applied science, such as fractional quantum mechanics, barrier problem, markov processes and phase transition phenomenon, see \cite{Guan,Las,Sil,Sir}. In recent decades, the study of problems of fractional Schr\"{o}dinger equation has attracted wide attention, see e.g.
\cite{Serv,Shang,Yan} and references therein.

In \cite{Alves}, Alves considered the following class of elliptic problems with a $L^2$-subcritical nonlinear term
\begin{eqnarray}\label{alves}
\left\{
\begin{array}{ll}
	-\Delta u=\lambda u+h(\epsilon x)f(u)&\mbox{in}\ \R^N,\\
	\displaystyle\int_{\mathbb{R}^N}|u|^2\mathrm{d}x=a.
\end{array}
\right.
\end{eqnarray}
By using the variational approaches, the author shows that problem \eqref{alves} admits multiple normalized solutions if $\epsilon$ is small enough. Particularly, the numbers of normalized solutions are at least the numbers of global maximum points of $h$. Moreover, for the following class of problem
\begin{eqnarray*}
\left\{
\begin{array}{ll}
	-\Delta u+V(\epsilon x)u=\lambda u+f(u)&\mbox{in}\ \R^N,\\
	\displaystyle\int_{\mathbb{R}^N}|u|^2\mathrm{d}x=a,
\end{array}
\right.
\end{eqnarray*}
a similar result is also obtained for some negative and continuous potential $V$.

Motivated by \cite{Alves}, our interest is mainly focused on the fractional case with both potentials and weights. Actually, our purpose of this paper is devoted to the multiplicity of normalized solutions for the fractional Schr\"{o}dinger equation
\begin{eqnarray}\label{1.2}
\left\{
\begin{array}{ll}
	(-\Delta)^su+V(\epsilon x)u=\lambda u+h(\epsilon x)f(u)&\mbox{in}\ \R^N,\\
	\displaystyle\int_{\mathbb{R}^N}|u|^2\mathrm{d}x=a,&
\end{array}
\right.
\end{eqnarray}
where $s\in(0,1)$, $a,\epsilon>0$, $\lambda\in\R$ is an unknown parameter that appears as a Lagrange multiplier.

In the local case, when $s=1$, the fractional laplace $(-\Delta)^s$ reduces to the local differential opterator $-\Delta$. If $V(x)\equiv0$, Jeanjean's \cite{Jean} exploited the mountain pass geometry to deal with existence of normalized solutions in purely $L^2$-supercritical, we refer \cite{Bonhe,Guo,Hir,Li} for more results in this type of problems. In
\cite{Miao}, they considered the related problem for $q=2+\frac{4}{N}$.
The multiplicity of normalized solutions for the Schr\"{o}dinger equation or systems has also been extensively investigated, see \cite{Gou,Jean,Jean,Shi}.

For the non-potential case, a large body of literature is devoted to the following problem:
\begin{eqnarray}
	\left\{
	\begin{array}{ll}
		-\Delta u=\lambda u+g(u)&\mbox{in}\ \R^N,\\
		\displaystyle\int_{\mathbb{R}^N}|u|^2\mathrm{d}x=a^2.&
	\end{array}
	\right.
\end{eqnarray}
In particular, for the case $g(u)=|u|^{p-1}u$, by assuming $H^1$-precompactness of any minimizing sequences, Cazenave and Lions \cite{Cazenave} showed the attainability of the $L^2$-constraint minimization problem and orbital stability of global minimizers, it is assumed that $E_\alpha<0$ for all $\alpha>0$, and then, the strict subadditivity condition:
\begin{equation}\lab{E}
E_{\alpha+\beta}<E_\alpha+E_\beta
\end{equation}
holds. However, when dealing with the general function $g$, it is difficult to show \eqref{E} holds. Shibata \cite{Shi} proved the subadditivity condition \eqref{E} using a scaling argument.

In addition, if $V(x)\not\equiv0$, Ikoma and Miyamoto \cite{Ikoma}
studies the existence and nonexistence  of a minimizer of the $L^2$-constraint minimization problem
$$e(a)=\mbox{inf}\{E(u)|u\in H^1(\RN),|u|^2_2=a\},
$$
where
$$
E(u)=\frac{1}{2}\int_{\R^N}(|\nabla u|^{2}\mathrm{d}x+V(x)|u|^2)\mathrm{d}x-\int_{\R^N}F(u)\mathrm{d}x,
$$
$V$ and $f$ satisfy some suitable assumptions. They performed a careful analysis to exclude dichotomy and proved the precompactness of the modified minimizing sequence. When dealing with general nonlinear terms in mass subcritical cases, one can apply the subadditive inequality to prove the compactness of the minimzing sequence.

Zhong and Zou in \cite{Zhon} studied the existence of ground state normalized solution to Schr\"{o}dinger equations with potential under  different assumptions, and presented a new approach to establish the strict sub-additive inequality. Alves and Thin\cite{AlvesT} study the existence of multiple normalized solutions to the following class of ellptic problems
\begin{eqnarray}
	\left\{
	\begin{array}{ll}
		-\Delta u+V(\epsilon x)u=\lambda u+f(u)&\mbox{in}\ \R^N,\\
		\displaystyle\int_{\mathbb{R}^N}|u|^2\mathrm{d}x=a,&
	\end{array}
	\right.
\end{eqnarray}
where $\epsilon>0$, $V:\R^N\rightarrow[0,\infty)$ is a continuous  function, and $f$ is a differentiable function with $L^2$-subcritical growth. For normalized solutions of the nonlinear Schr\"{o}dinger equation with potential, we also see \cite{Bartsch,Ikoma2,Molle} and the references therein.

In the case $0<s<1$, few results are available. In the paper \cite{Yang} the author proved some existence and asymptotic results for the fractional nonlinear Schr\"{o}dinger equation. For the particular case of a combined nonlinearity of power type, namely $f(t)=\mu|u|^{q-2}u+|u|^{p-2}u$, $h(x)=1$ and $V(x)\equiv0$, $\rm{i.e}$ $2<q<p<2_s^*=\frac{2N}{N-2s}$. Dinh \cite{Dinh} studied the existence
and nonexistence of normalized solutions for the fractional Schr\"{o}dinger equations
\begin{align}\label{tt}
(-\Delta)^su+V(x)u=|u|^{p-2}u,\ \ \mbox{in}\ \ \RN.
\end{align}
By using the concentration-compactness principle, he showed  a complete classification
for the existence and non-existence of normalized solutions for the problem \eqref{tt}. For more results about the fractional Schr\"{o}dinger equations, we can refer to \cite{Luo,Feng} and the references therein.

\vskip4pt
\subsection{Main results} In what follows, we assume $f\in C^1(\RN,\R)$ is odd, continuous and satisfies the following assumptions on $f$.
\begin{itemize}
\item [($f_1$)] $\lim\limits_{t\rg0}\frac{|f(t)|}{|t|^{q-1}}=c>0$, where $2<q<\bar{p}=2+\frac{4s}{N}$.
\item [($f_2$)] $\lim\limits_{t\rg\infty}\frac{|f(t)|}{|t|^{p-1}}=0$, where $2<p<\bar{p}=2+\frac{4s}{N}$.
\item [($f_3$)] There exist $\alpha,\beta\in\R$ satisfying $2<\alpha\le \beta<\bar{p}$ such that $$0<\alpha F(t)\le tf(t)\le F(t)\beta \ \ \mbox{for any} \ t>0.$$
\end{itemize}
Moreover, $h$ and $V$ satisfy the following assumptions.
\begin{itemize}
\item [($A_1$)]  $h\in C(\RN,\R^+)$,
$0<h_{\iy}=\lim\limits_{|x|\rightarrow+\infty}h(x)<\max\limits_{x\in\RN}h(x)=h(a_i)$ for $1\le i\le k$ with $a_1=0$ and $a_j\not=a_i$ if $i\not=j$.
\item [($A_2$)]  $V\in C(\RN,\R)$, $V(a_i)=\inf\limits_{x\in\RN}V(x)<\lim\limits_{|x|\rightarrow+\infty}V(x)=0$ for $1\le i\le k$.
\end{itemize}
The problem \eqref{1.2} is variational and the associated energy functional is given by
\begin{equation}\lab{xx4}
I_\epsilon(u)=\frac{1}{2}\int_{\R^N}|(-\Delta)^{\frac{s}{2}}u|^{2}\mathrm{d}x+\frac{1}{2}\int_{\RN}V(\epsilon x)u^2\mathrm{d}x-\int_{\R^N}h(\epsilon x)F(u)\mathrm{d}x, \ u\in H^s(\RN)
\end{equation}
with
$$
\int_{\R^N}|(-\Delta)^{\frac{s}{2}}u|^{2}\mathrm{d}x=\iint_{\R^{2N}}\frac{|u(x)-u(y)|^2}{|x-y|^{N+2s}}\mathrm{d}x\mathrm{d}y.
$$
It is easy to know that $I_\epsilon\in C^1(H^s(\RN),\R)$ and $$I_\epsilon'(u)\varphi=\int_{\RN}(-\Delta)^{\frac{s}{2}}u(-\Delta)^{\frac{s}{2}}\varphi \mathrm{d}x+\int_{\RN}V(\epsilon x)u\varphi \mathrm{d}x-\int_{\RN}h(\epsilon x)f(u)\varphi \mathrm{d}x,\ \ \forall \varphi\in H^s(\RN).$$
The solutions to \eqref{1.2} can be characterized as critical points of the function  $I_\epsilon(u)$ constrained on the sphere
\begin{equation}\lab{q}
S_a=\left\{u\in H^s(\RN):\int_{\RN}|u|^2\mathrm{d}x=a\right\}
\end{equation}
Now, we are ready to state the main result of this paper.
\begin{theorem}\lab{Th1}
Suppose $(A_1),(A_2)$, $(f_1)-(f_3)$ hold, then there exists $\epsilon_1>0$ such that  problem \eqref{1.2} admits at least $k$ couples $(u_j,\lambda_j)\in H^s(\R^N)\times\R$ of weak solutions for $\epsilon\in(0,\epsilon_1)$ with $\int_{\R^N}|u_j|^2\mathrm{d}x=a$, $\lambda<0$ and $I_\epsilon(u_j)<0$ for $j=1,2,\cdots,k$.
\end{theorem}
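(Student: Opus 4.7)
I would adapt the barycenter and localization scheme of Alves \cite{Alves} to the fractional setting: produce one constrained minimizer of $I_\epsilon$ on $S_a$ near each global maximum point $a_i$ of $h$ (which by $(A_1)$--$(A_2)$ is simultaneously a global minimum of $V$) and distinguish the resulting $k$ solutions through a truncated center-of-mass map. Since $f$ is $L^2$-subcritical by $(f_1)$--$(f_2)$ with $q,p<\bar p=2+4s/N$, the fractional Gagliardo--Nirenberg inequality gives
\[
\int_{\RN} h(\epsilon x) F(u)\,\ud x \le C(a)\bigl(1+\|(-\Delta)^{s/2}u\|_2^{\theta}\bigr),\qquad \theta<2,
\]
on $S_a$; combined with $V\ge V(a_1)$ this makes $I_\epsilon$ coercive and bounded below on $S_a$, and a scaled bump shows $\Theta_\epsilon:=\inf_{S_a}I_\epsilon<0$.

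Next I would introduce, for each $i$, the autonomous ``frozen'' functional
\[
I^i(u)=\tfrac12\|(-\Delta)^{s/2}u\|_2^2+\tfrac{V(a_i)}{2}\|u\|_2^2-h(a_i)\int_{\RN} F(u)\,\ud x
\]
with infimum $\Theta^i$ on $S_a$, as well as the problem at infinity $I^\infty$ (with $V_\infty=0$ and $h_\infty$) with infimum $\Theta^\infty$. A standard Lions-type concentration-compactness argument on $H^s(\RN)$ yields minimizers $w^i\in S_a$ of $I^i$ (and also of $I^\infty$). A scaling comparison exploiting $V(a_i)<0=V_\infty$ and $h(a_i)>h_\infty$ together with $F\ge 0$ delivers the crucial strict inequality $\Theta^i<\Theta^\infty$ for every $i$.

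To localize, I fix $r>0$ with the balls $B_{2r}(a_i)$ pairwise disjoint and define a truncated barycenter $\beta_\epsilon(u)=\tfrac{1}{a}\int_{\RN}\Upsilon(\epsilon x)|u(x)|^2\,\ud x$, with $\Upsilon$ bounded and agreeing with the identity on a large ball containing all $a_i$. Translating and slightly rescaling $w^i$ so that its mass concentrates near $a_i/\epsilon$, continuity of $V$ and $h$ at $a_i$ combined with $(A_1)$--$(A_2)$ produces a cut-off sequence $v_\epsilon^i\in S_a$ with $\beta_\epsilon(v_\epsilon^i)\to a_i$ and $I_\epsilon(v_\epsilon^i)\to \Theta^i$; hence, defining
\[
\Gamma_\epsilon^i:=\{u\in S_a:\beta_\epsilon(u)\in \overline{B_r(a_i)}\},\qquad \Theta_\epsilon^i:=\inf_{\Gamma_\epsilon^i}I_\epsilon,
\]
one obtains $\limsup_{\epsilon\to0^+}\Theta_\epsilon^i\le \Theta^i<\Theta^\infty$. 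Ekeland's principle on $\Gamma_\epsilon^i$ yields a Palais--Smale sequence $(u_n^i)_n\subset \Gamma_\epsilon^i$ at level $\Theta_\epsilon^i$ with vanishing tangential derivative.

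The main obstacle is compactness: showing $u_n^i\to u_\epsilon^i$ strongly in $H^s(\RN)$ for $\epsilon$ small enough. Boundedness follows from coercivity; a fractional Brezis--Lieb splitting decomposes the energy of the weak limit plus the contributions escaping to infinity (governed by $I^\infty$). The strict gap $\Theta_\epsilon^i<\Theta^\infty$ rules out mass escaping to infinity, while $\Theta_\epsilon^i<0$ rules out vanishing; the barycenter constraint, whose value is preserved in the limit by the truncation of $\Upsilon$, ensures the limit remains in $\Gamma_\epsilon^i$, so that $u_\epsilon^i$ is a constrained critical point with $I_\epsilon'(u_\epsilon^i)=\lambda_i u_\epsilon^i$. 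The disjointness of $B_r(a_i)$ separates the $k$ minimizers. Finally, testing the equation against $u_\epsilon^i$ and using $(f_3)$, which yields $\int h(\epsilon x)f(u_\epsilon^i)u_\epsilon^i\,\ud x\ge \alpha\int h(\epsilon x)F(u_\epsilon^i)\,\ud x$, combined with $I_\epsilon(u_\epsilon^i)<0$, forces $\lambda_i<0$. The delicate points are the verification of the strict inequality $\Theta^i<\Theta^\infty$ with quantitative control and the clean implementation of the splitting with both the varying potential $V(\epsilon\cdot)$ and weight $h(\epsilon\cdot)$ present.
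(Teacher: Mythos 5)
Your plan follows the same Alves-type scheme as the paper: frozen autonomous functionals $I_{a_i}$ and a functional at infinity $I_{\infty}$, the strict level comparison $E_{a_i,a}<E_{\infty,a}<0$, a truncated barycenter map localizing the minimization over sets $\theta_\epsilon^i$, Ekeland's variational principle, a Palais--Smale condition below a suitable threshold, and separation of the $k$ critical points via the disjointness of the barycenter regions; your sign argument for $\lambda_j<0$ via $(f_3)$ is also essentially the one used in the paper.

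The genuine gap is in your exclusion of dichotomy. You assert that the strict gap $\Theta_\epsilon^i<\Theta^\infty$ ``rules out mass escaping to infinity'', but that only disposes of the case where the \emph{entire} mass escapes (weak limit zero, the paper's Lemma \ref{ll3}). In the dichotomy case the Brezis--Lieb splitting gives $c\ge E_{\infty,d}+E_{a_i,b}$ with $b=|u_\epsilon|_2^2\in(0,a)$ and $d=a-b$, and to compare this with $E_{a_i,a}$ one needs the sub-homogeneity inequality $E_{\theta a}\le\theta E_a$ for $\theta>1$ (equivalently $E_c\ge \tfrac{c}{a}E_a$ for $0<c\le a$), which the paper establishes in Lemma \ref{3} by a nontrivial scaling argument on the Pohozaev manifold following Zhong--Zou; your plan never mentions this ingredient. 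Even granting it, the resulting bound is $c\ge E_{a_i,a}+\tfrac{d}{a}(E_{\infty,a}-E_{a_i,a})$, which gives no contradiction when the escaping mass $d$ is small: the paper must first prove the uniform-in-$\epsilon$ lower bound $\liminf_n|u_n-u_\epsilon|_2^2\ge\beta>0$ of Lemma \ref{lll3}, which rests on the uniform Lagrange-multiplier estimate $\lambda_\epsilon\le 2E_{\infty,a}/a<0$, and only then can the threshold $\rho_0$ in Lemma \ref{x4} be chosen independently of $\epsilon$. Without these two quantitative inputs your compactness step does not close.
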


The paper is organized as follows. In Section 2, we study the autonomous problem and give some useful results which will be used later. Section 3 is devoted to the non-autonomous problem. In Section 4, the proof of Theorem \ref{Th1} is given.

\vskip0.1in

\s{The autonomous problem}
\renewcommand{\theequation}{2.\arabic{equation}}

In this section, we focus on the existence of normalized solution for the autonomous problem
\begin{equation}\lab{21} \left\{
	\begin{array}{ll}
		(-\Delta)^su+\eta u=\lambda u+\mu f(u)&\mbox{in}\ \R^N,\\
		\displaystyle\int_{\mathbb{R}^N}|u|^2\mathrm{d}x=a,&
	\end{array}
	\right.
\end{equation}
where $s\in(0,1)$, $a,\mu>0$, $\eta\leq0$ and $\lambda\in\R$ is an unknown parameter that appears as a Lagrange multiplier. With the assumptions $(f_{1})-(f_{3})$, it is standard to show that the solutions to \eqref{21} can be characterized as critical points of the function as follows
\begin{equation}\lab{q2}
	J(u)=\frac{1}{2}\int_{\R^N}|(-\Delta)^{\frac{s}{2}}u|^{2}\mathrm{d}x+\frac{\eta}{2}\int_{\RN}u^2\mathrm{d}x-\mu\int_{\R^N}F(u)\mathrm{d}x
\end{equation}
restricted to the sphere $S_a$ given in \eqref{q}.
Meanwhile, set
$$
J_0(u)=\frac{1}{2}\int_{\R^N}|(-\Delta)^{\frac{s}{2}}u|^{2}\mathrm{d}x-\mu\int_{\R^N}F(u)\mathrm{d}x
$$
and
\begin{align*}
\Upsilon_a=\inf\limits_{S_a} J_0(u).
\end{align*}

\begin{theorem}\lab{Th2.1}
	Suppose that $f$ satisfies the conditions $(f_{1})-(f_{3})$. Then, problem \eqref{21} has a couple $(u,\lambda)$ solution, where $u$ is positive, radial and $\lambda<\eta$.
\end{theorem}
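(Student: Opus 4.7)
The plan is to realize the solution as a minimizer of $J_0$ on $S_a$, which is equivalent to minimizing $J$ since the two differ by the constant $\eta a/2$ on $S_a$, extract a radial minimizer via Schwarz symmetrization, and then read off the Lagrange multiplier. Two structural facts come first. By $(f_1)$--$(f_2)$ one has $F(t) \le C(|t|^q + |t|^p)$ with $2<q,p<\bar p = 2+4s/N$. The fractional Gagliardo--Nirenberg inequality $\|u\|_r^r \le C\|u\|_2^{r-N(r-2)/(2s)}\|(-\Delta)^{s/2}u\|_2^{N(r-2)/(2s)}$ with $r\in\{q,p\}$ places an exponent strictly less than $2$ on the kinetic norm, so $J_0$ is bounded below and coercive on $S_a$. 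For $\Upsilon_a < 0$, pick $u\in S_a$ and scale $u_t(x)=t^{N/2}u(tx)\in S_a$; the kinetic piece decays like $t^{2s}$ while $(f_1)$ makes the nonlinear piece behave like $t^{N(q-2)/2}$, and since $N(q-2)/2 < 2s$, one gets $J_0(u_t) < 0$ for $t$ small.

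The decisive ingredient is strict monotonicity of $a \mapsto \Upsilon_a$. Given $\theta>1$ and $u\in S_a$, set $\widetilde u(x) = u(\theta^{-1/N}x) \in S_{\theta a}$. A direct computation gives
$$J_0(\widetilde u) = \tfrac12\,\theta^{1-2s/N}\,\|(-\Delta)^{s/2}u\|_2^2 - \mu\theta\int_{\R^N} F(u)\,\ud x \;\le\; \theta J_0(u),$$
since $\theta^{1-2s/N}\le\theta$. Taking infima, $\Upsilon_{\theta a} \le \theta\,\Upsilon_a < \Upsilon_a$ because $\Upsilon_a<0$; equivalently, $\Upsilon_b > \Upsilon_a$ whenever $0<b<a$.

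To extract a minimizer, take a minimizing sequence $\{u_n\}\subset S_a$ and replace $u_n$ by its radial decreasing rearrangement $|u_n|^*$: the fractional P\'olya--Szeg\H o inequality controls the kinetic term, and $F$ is even and non-decreasing in $|u|$ by $(f_3)$, so $J_0$ does not increase. Thus $u_n$ may be assumed radial, non-negative and non-increasing, hence bounded in $H^s_{rad}(\R^N)$. For $N\ge 2$ the compact embedding $H^s_{rad}\hookrightarrow L^r$, $2<r<2^*_s$, together with Vitali's theorem, gives $u_n \to u$ in $L^q\cap L^p$ and $\int F(u_n)\to\int F(u)$. Set $b = \|u\|_2^2\le a$. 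If $b=0$ then $\int F(u_n)\to 0$ forces $\liminf J_0(u_n)\ge 0$, contradicting $\Upsilon_a<0$. If $0<b<a$ then $u\in S_b$ yields $J_0(u)\ge\Upsilon_b > \Upsilon_a$, contradicting the lower-semicontinuity bound $J_0(u)\le\Upsilon_a$. Hence $b=a$; weak $L^2$ convergence upgrades to strong, $\|(-\Delta)^{s/2}u_n\|_2\to\|(-\Delta)^{s/2}u\|_2$ follows from $J_0(u_n)\to J_0(u)$, and so $u_n\to u$ in $H^s$ with $u$ realizing $\Upsilon_a$.

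Finally, Lagrange's rule provides $\lambda\in\R$ with $(-\Delta)^s u + \eta u = \lambda u + \mu f(u)$. Testing with $u$ and comparing with $2J(u)$ gives $\lambda a = 2 J(u) + \mu\int_{\R^N}(2F(u) - f(u)u)\,\ud x$; by $(f_3)$, $2F(u) - f(u)u \le (2-\alpha)F(u)\le 0$, while $J(u) = \Upsilon_a + \eta a/2 < \eta a/2$, so $\lambda < \eta$. Radiality yields $u\ge 0$ and the strong maximum principle for $(-\Delta)^s$ promotes this to $u>0$. The main obstacle is the no-mass-loss step: it hinges entirely on the strict monotonicity $\Upsilon_b > \Upsilon_a$ for $b<a$, which in turn relies on $\Upsilon_a<0$ together with the scaling identity used above.
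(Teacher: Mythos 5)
Your proof is correct and reaches the same conclusion, but by a genuinely different and more economical route than the paper in two places. First, for the monotonicity of the ground-state energy you use the single dilation $\widetilde u(x)=u(\theta^{-1/N}x)$ and the elementary inequality $\theta^{1-2s/N}<\theta$ to get $\Upsilon_{\theta a}\le\theta\Upsilon_a<\Upsilon_a$ directly for every $\theta>1$; the paper instead passes through the Pohozaev set $P$, a derivative-in-$\nu$ computation, and an iteration $(1+k_0)^n<\theta<(1+k_0)^{n+1}$ to reach the full subadditivity $E_{a+b}\le E_a+E_b$ (strict when attained). Second, and more substantially, you replace the paper's concentration--compactness analysis (Br\'ezis--Lieb splitting, exclusion of dichotomy via strict subadditivity, exclusion of vanishing via the Lions-type lemma, its Lemma \ref{2.4}) by Schwarz symmetrization plus the compact embedding $H^s_{rad}(\R^N)\hookrightarrow L^r(\R^N)$, with mass loss ruled out by strict monotonicity alone. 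Your reduction of $J$ to $J_0+\eta a/2$ on $S_a$ also cleanly isolates the role of $\eta$, and your Lagrange-multiplier computation $\lambda a=2J(u)+\mu\int(2F(u)-f(u)u)\,\ud x$ is tidier than the paper's Pohozaev-based sign argument. What your route gives up: it requires $N\ge2$ (the compact radial embedding fails for $N=1$, whereas translation compactness does not), and it only produces a radial minimizer rather than the precompactness, up to translations, of \emph{arbitrary} minimizing sequences; the paper needs that stronger statement again in Section 3 (in the proof of Lemma \ref{kkk3}), so its heavier machinery is not redundant in context, even though for Theorem \ref{Th2.1} alone your argument suffices.
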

The proof of Theorem \ref{Th2.1} is standard. For the sake of convenience, we give the details. Before the proof, some lemmas are given below.
\begin{lemma}\lab{ee}
	Assume $u$ is a solution to \eqref{21}, then $u\in S_a\cap  P$, where
	$$
	P:=\left\{u\in H^s(\R^N)|\int_{\R^N}|(-\Delta)^{\frac{s}{2}}u|^2\mathrm{d}x+\frac{N\mu}{s}\int_{\RN}F(u)\mathrm{d}x-\frac{N\mu}{2s}\int_{\RN}f(u)u\mathrm{d}x=0\right\}.
	$$
\end{lemma}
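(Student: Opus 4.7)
The membership $u \in S_a$ is immediate from the mass constraint in \eqref{21}, so the real content is the Pohozaev-type identity defining $P$. My plan is to combine two identities associated to any weak solution of \eqref{21}: the standard testing-against-$u$ identity, and the fractional Pohozaev identity. Rewrite \eqref{21} as
\[
(-\Delta)^{s}u = (\lambda-\eta)u + \mu f(u) \quad \text{in } \R^{N},
\]
so that the right-hand side is of the form $g(u)$ with $G(u)=\tfrac{\lambda-\eta}{2}u^{2}+\mu F(u)$.

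Testing the equation against $u$ (which is legitimate since $u\in H^{s}(\R^{N})\cap S_{a}$) gives
\[
\int_{\R^{N}}|(-\Delta)^{\frac{s}{2}}u|^{2}\,\mathrm{d}x + (\eta-\lambda)a = \mu\int_{\R^{N}} f(u)u\,\mathrm{d}x.
\]
Next I would invoke the fractional Pohozaev identity (as proved by Ros-Oton and Serra, and adapted in the references cited in the paper for equations of this form), which for a solution of $(-\Delta)^{s}u = g(u)$ with appropriate decay reads
\[
\frac{N-2s}{2}\int_{\R^{N}}|(-\Delta)^{\frac{s}{2}}u|^{2}\,\mathrm{d}x = N\int_{\R^{N}} G(u)\,\mathrm{d}x.
\]
Applied here, this becomes
\[
\frac{N-2s}{2}\int_{\R^{N}}|(-\Delta)^{\frac{s}{2}}u|^{2}\,\mathrm{d}x = \frac{N(\lambda-\eta)}{2}\,a + N\mu\int_{\R^{N}} F(u)\,\mathrm{d}x.
\]

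To finish, I would eliminate the unknown Lagrange multiplier $\lambda$ from the two displayed identities. Solving the first for $(\lambda-\eta)a$ and substituting into the second gives
\[
\frac{N-2s}{2}\int_{\R^{N}}|(-\Delta)^{\frac{s}{2}}u|^{2}\,\mathrm{d}x = \frac{N}{2}\left(\int_{\R^{N}}|(-\Delta)^{\frac{s}{2}}u|^{2}\,\mathrm{d}x - \mu\int_{\R^{N}} f(u)u\,\mathrm{d}x\right) + N\mu\int_{\R^{N}} F(u)\,\mathrm{d}x,
\]
and rearranging yields
\[
-s\int_{\R^{N}}|(-\Delta)^{\frac{s}{2}}u|^{2}\,\mathrm{d}x = -\frac{N\mu}{2}\int_{\R^{N}} f(u)u\,\mathrm{d}x + N\mu\int_{\R^{N}} F(u)\,\mathrm{d}x,
\]
which, after dividing by $-s$ and reordering, is exactly the identity defining $P$.

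The only real obstacle is justifying the Pohozaev identity in the fractional setting: it requires enough regularity/decay of $u$ to make the scaling argument rigorous. Here one appeals to the regularity theory for the fractional Laplacian applied to solutions of \eqref{21} under $(f_{1})$--$(f_{2})$ (so that $f(u)\in L^{\infty}$ after bootstrapping and $u$ decays), which has been established in the literature on the fractional Schr\"odinger equation cited in the introduction; once this is in place, the rest of the argument is purely algebraic.
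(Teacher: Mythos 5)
Your proposal is correct and follows essentially the same route as the paper: test the equation against $u$ to get the Nehari-type identity, invoke the fractional Pohozaev identity, and eliminate the Lagrange multiplier $\lambda-\eta$ algebraically to land on the identity defining $P$. The only difference is cosmetic — the paper simply asserts the Pohozaev identity with "one can show," while you explicitly flag the regularity/decay justification needed to make it rigorous, which is if anything a more careful presentation of the same argument.
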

\begin{proof}
	Let $u$ be a solution \eqref{21}, then we get
	\begin{equation}\lab{c}
		\int_{\R^N}|(-\Delta)^{\frac{s}{2}}u|^2\mathrm{d}x+(\eta-\lambda)\int_{\RN}u^2\mathrm{d}x-\mu\int_{\RN}f(u)u\mathrm{d}x=0,
	\end{equation}
In addition, one can show that $u$ satisfies the Pohozeav identity
	$$(N-2s)\int_{\R^N}|(-\Delta)^{\frac{s}{2}}u|^2\mathrm{d}x+N(\eta-\lambda)\int_{\RN}u^2-2N\mu\int_{\RN}F(u)=0.$$
	Combining with  \eqref{c}, we obtain that
	$$
	\int_{\R^N}|(-\Delta)^{\frac{s}{2}}u|^2\mathrm{d}x+\frac{N\mu}{s}\int_{\RN}F(u)-\frac{N\mu}{2s}\int_{\RN}f(u)u\mathrm{d}x=0.
	$$
\end{proof}
\begin{lemma}\lab{1}
	Assume $(f_1)-(f_2)$, then we have
	\begin{itemize}
		\item [$(i)$] $J$ is bounded from below on $S_a$,
		\item [$(ii)$] any minimizing sequence for $J$ is bounded in $H^s(\RN)$.
	\end{itemize}
\end{lemma}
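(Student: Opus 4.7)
The plan is to combine the $L^2$-subcritical growth of $f$ (hence of $F$) with the fractional Gagliardo-Nirenberg inequality, exploiting that $u\in S_a$ fixes the $L^2$-norm, so that only the kinetic term $\|(-\Delta)^{s/2}u\|_2^2$ carries the variation.

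First, from $(f_1)$ and $(f_2)$ together with continuity of $f$, for every $t\in\R$ one has a uniform bound
\[
|F(t)|\le C\bigl(|t|^q+|t|^p\bigr),\qquad 2<q,\,p<\bar p=2+\tfrac{4s}{N},
\]
obtained by integrating $f$ and splitting the estimate near $0$ and at $\infty$. Next I would invoke the fractional Gagliardo-Nirenberg inequality: for every $r\in(2,2_s^*)$,
\[
\|u\|_r^{r}\le C_r\,\|(-\Delta)^{s/2}u\|_2^{\theta_r}\,\|u\|_2^{r-\theta_r},\qquad \theta_r=\frac{N(r-2)}{2s}.
\]
The key structural fact is that the $L^2$-subcritical condition $r<\bar p$ is equivalent to $\theta_r<2$; both $\theta_q$ and $\theta_p$ therefore lie strictly below $2$.

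For $u\in S_a$, using $\|u\|_2^2=a$ and the estimates above,
\[
\mu\!\int_{\R^N}\!F(u)\,\ud x \le C\bigl(a^{(q-\theta_q)/2}\|(-\Delta)^{s/2}u\|_2^{\theta_q}+a^{(p-\theta_p)/2}\|(-\Delta)^{s/2}u\|_2^{\theta_p}\bigr).
\]
Writing $t:=\|(-\Delta)^{s/2}u\|_2$ and using $\eta\le 0$, the functional $J$ from \eqref{q2} obeys
\[
J(u)\ge \tfrac12\,t^{2}-C_1(a)\,t^{\theta_q}-C_2(a)\,t^{\theta_p}+\tfrac{\eta a}{2}.
\]
Because $\theta_q,\theta_p<2$, the right-hand side is a continuous function of $t\ge 0$ that tends to $+\infty$ as $t\to+\infty$; hence it attains a finite global minimum on $[0,\infty)$, proving $(i)$.

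For $(ii)$, let $\{u_n\}\subset S_a$ be a minimizing sequence, so $J(u_n)\le M$ for some $M\in\R$. Rearranging the same inequality gives
\[
\tfrac12\,t_n^{2}\le M+\tfrac{|\eta|a}{2}+C_1(a)\,t_n^{\theta_q}+C_2(a)\,t_n^{\theta_p},\qquad t_n:=\|(-\Delta)^{s/2}u_n\|_2.
\]
Since $\theta_q,\theta_p<2$, this forces $t_n$ to remain bounded (otherwise the left-hand side would grow strictly faster than the right-hand side). Together with $\|u_n\|_2^2=a$, this yields boundedness of $\{u_n\}$ in $H^s(\R^N)$. The only delicate point is ensuring that the growth exponents in the Gagliardo-Nirenberg estimate satisfy $\theta_q,\theta_p<2$, which is exactly the $L^2$-subcritical restriction $q,p<\bar p$ imposed in $(f_1)$-$(f_2)$; everything else is routine interpolation and Young-type reasoning.
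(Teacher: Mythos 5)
Your proof is correct and follows essentially the same route as the paper: bound $|F(t)|\le C(|t|^q+|t|^p)$ using $(f_1)$--$(f_2)$, apply the fractional Gagliardo--Nirenberg inequality with $\|u\|_2^2=a$ fixed on $S_a$, and observe that the $L^2$-subcritical restriction $q,p<\bar p$ makes the exponents on the kinetic term strictly less than $2$, giving coercivity in $\|(-\Delta)^{s/2}u\|_2$ and hence both boundedness from below and boundedness of minimizing sequences. No gaps.
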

\begin{proof}
	$(i)$ According the assumptions $(f_{1})-(f_{2})$, there exists $C>0$ such that
	\begin{equation}\lab{2333}
		|F(t)|\le C(|t|^q+|t|^p),\ \ \forall t\in\R.
	\end{equation}
	By the fractional Gagliardo-Nirenberg-Sobolev inequality \cite{Frank},
	\begin{equation}\lab{23}
		\int_{\R^N}|u|^\alpha\le C(s,N,\alpha)(\int_{\R^N}|(-\Delta)^{\frac{s}{2}}u|^2)^{\frac{N(\alpha-2)}{4s}}(\int_{\R^N}|u|^2)^{\frac{\alpha}{2}-\frac{N(\alpha-2)}{4s}},
	\end{equation}	
	for some positive constant $C(s,N,\alpha)>0$. Then, \eqref{2333} and \eqref{23} give that
\begin{align}\lab{233}
	J(u)\ge&\frac{1}{2}\int_{\R^N}(|(-\Delta)^{\frac{s}{2}}u|^2+\frac{\eta}{2}u^2)\mathrm{d}x-\frac{\mu C(s,N,q)}{q}a^{\frac{q}{2}-\frac{N(q-2)}{4s}}(\int_{\R^N}|(-\Delta)^{\frac{s}{2}}u|^2\mathrm{d}x)^\frac{N(q-2)}{4s}\nonumber\\
	&-\frac{\mu C(s,N,p)}{p}a^{\frac{p}{2}-\frac{N(p-2)}{4s}}(\int_{\R^N}|(-\Delta)^{\frac{s}{2}}u|^2\mathrm{d}x)^\frac{N(p-2)}{4s}.
\end{align}
Since $q,p\in(2,2+\frac{4s}{N})$, we infer that $0<\frac{N(q-2)}{4s},\frac{N(p-2)}{4s}<1$. Therefore $J(u)$ is bounded from below on $S_a$.
	
$(ii)$ Since $u\in S_a$, the conclusion immediately follows from \eqref{233}.
\end{proof}

The lemma above guarantees that $$E_{a}=\inf\limits_{u\in S_a}J(u)$$ is well defined. Now we study the properties of the function $J$ defined in \eqref{21} restrict to $S_a$ and prove Theorem \ref{Th2.1}.
\begin{lemma}\lab{2}
	For any $a>0$ and $\eta\leq0$, there holds $E_{a}<0$. In particular, we have $E_a<\frac{\eta a}{2}$.
\end{lemma}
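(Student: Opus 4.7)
The plan is to exhibit an explicit one-parameter family $\{u_\tau\}_{\tau>0}\subset S_a$ along which $J$ drops strictly below $\frac{\eta a}{2}$ as $\tau\to 0^+$. The natural test family is the $L^2$-preserving scaling
\begin{equation*}
u_\tau(x):=\tau^{N/2}u(\tau x),
\end{equation*}
where $u\in S_a$ is any fixed nonnegative, nontrivial function (for instance a smooth compactly supported bump normalized to have $L^2$-norm $\sqrt{a}$). A direct change of variables shows that $\|u_\tau\|_2^2=a$, that $\int_{\R^N}|(-\Delta)^{s/2}u_\tau|^2\,\mathrm{d}x=\tau^{2s}\int_{\R^N}|(-\Delta)^{s/2}u|^2\,\mathrm{d}x$, and that the potential contribution $\frac{\eta}{2}\int u_\tau^2\,\mathrm{d}x$ reduces to the constant $\frac{\eta a}{2}$.

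The heart of the argument is the behaviour of the nonlinear term under this scaling, which is controlled by $(f_3)$. The inequality $tf(t)\le\beta F(t)$ on $(0,\infty)$ is equivalent to $t\mapsto F(t)/t^\beta$ being non-increasing there, so for every $t\ge 0$ and every $\sigma\in(0,1]$ we have $F(\sigma t)\ge \sigma^\beta F(t)$. Applying this pointwise with $\sigma=\tau^{N/2}\in(0,1]$ and changing variables yields
\begin{equation*}
\int_{\R^N}F(u_\tau(x))\,\mathrm{d}x=\tau^{-N}\int_{\R^N}F(\tau^{N/2}u(y))\,\mathrm{d}y\ge \tau^{N(\beta-2)/2}\int_{\R^N}F(u)\,\mathrm{d}y.
\end{equation*}
In addition, the left-hand inequality $0<\alpha F(t)$ in $(f_3)$ forces $F(t)>0$ for $t>0$, so that $\int_{\R^N}F(u)\,\mathrm{d}y>0$ for our chosen $u$.

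Substituting these identities into \eqref{q2} gives, for all $\tau\in(0,1]$,
\begin{equation*}
J(u_\tau)\le \frac{\eta a}{2}+\frac{\tau^{2s}}{2}\int_{\R^N}|(-\Delta)^{s/2}u|^2\,\mathrm{d}x-\mu\,\tau^{N(\beta-2)/2}\int_{\R^N}F(u)\,\mathrm{d}x.
\end{equation*}
The mass-subcriticality assumption $\beta<\bar p=2+\tfrac{4s}{N}$ translates exactly into the inequality $N(\beta-2)/2<2s$, so as $\tau\to 0^+$ the positive kinetic term $\tau^{2s}$ is negligible compared with the negative term $\tau^{N(\beta-2)/2}$. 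Hence for all sufficiently small $\tau>0$ the right-hand side is strictly less than $\frac{\eta a}{2}$, and taking the infimum over $S_a$ gives $E_a\le J(u_\tau)<\frac{\eta a}{2}\le 0$, as claimed. The only point requiring any genuine care is extracting the monotonicity of $F(t)/t^\beta$ from $(f_3)$, a one-line computation with the quotient rule; everything else is the standard mass-subcritical scaling trick adapted to the fractional kinetic term.
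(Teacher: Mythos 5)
Your proof is correct, and its skeleton --- testing $J$ on the $L^2$-preserving dilation family and observing that the mass-subcritical exponent of the nonlinear term beats the kinetic exponent $2s$ --- is exactly the paper's (the paper writes the dilation as $(\tau\ast u)(x)=e^{\frac{N}{2}\tau}u(e^\tau x)$ with $\tau\to-\infty$, which is your $u_\tau$ with $\tau\to0^+$). The one genuine difference is the source of the lower bound on $\int F$: the paper uses $(f_1)$, choosing $\zeta>0$ with $qF(t)/t^q\ge c/2$ on $[0,\zeta]$ and a bounded test function so that the rescaled amplitude falls below $\zeta$, which gives the bound with exponent $q<\bar{p}$; you instead use the right-hand inequality of $(f_3)$ to derive the global scaling inequality $F(\sigma t)\ge\sigma^{\beta}F(t)$ for $\sigma\in(0,1]$, which gives exponent $\beta<\bar{p}$. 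Both exponents lie strictly below $2+\frac{4s}{N}$, so both close the argument. Your variant is slightly cleaner in that it requires no $L^{\infty}$ control of the test function and the estimate holds for all $\tau\in(0,1]$ rather than only in the small-amplitude regime; the paper's variant has the advantage of using only $(f_1)$, so it would survive without the Ambrosetti--Rabinowitz-type condition $(f_3)$. The only step you should write out in full is the monotonicity of $t\mapsto F(t)/t^{\beta}$, which uses $F'=f$ together with $F(t)>0$ for $t>0$ (both guaranteed by the standing assumptions), and the observation that the pointwise inequality is trivial where $u(y)=0$ since $F(0)=0$.
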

\begin{proof}
	According $(f_1)$, $\lim\limits_{t\to0}\frac{qF(t)}{t^q}=c>0$ and then there exists $\zeta>0$ such that
	\begin{equation}\lab{25}
		\frac{qF(t)}{t^q}\ge\frac{c}{2},\ \forall t\in[0,\zeta].
	\end{equation}
	In fact, taking $u\in S_a\cap L^{\iy}(\RN)$ as a fixed nonnegative function, we define
$$
(\tau\ast u)(x)=e^{\frac{N}{2}\tau}u(e^\tau x),\ \ \mbox{for}\  \mbox{all}\  x\in\R^N\  \   \mbox{and}\ \mbox{all}\ \tau\in\R,
$$
then $\tau\ast u\in S_a$. Moreover, for $\tau<0$ and $|\tau|$ large enough, we have $$0\le e^{\frac{N}{2}\tau}u(x)\le\zeta,\ \  \forall x\in\R^N,$$
which combines with \eqref{25} give that$$\int_{\R^N}F(\tau\ast u)\mathrm{d}x\ge Ce^{\frac{(q-2)N\tau}{2}}\int_{\R^N}|u|^q\mathrm{d}x.$$
It follows that
	\begin{align}
		J(\tau\ast u)=&\frac{1}{2}\int_{\R^N}|(-\Delta)^{\frac{s}{2}}(\tau\ast u)|^{2}\mathrm{d}x+\frac{\eta a}{2}-
		\mu\int_{\R^N}F(\tau\ast u)\mathrm{d}x\nonumber\\
		\le&\frac{1}{2}e^{2s\tau}\int_{\R^N}|(-\Delta)^{\frac{s}{2}}u|^{2}\mathrm{d}x+\frac{\eta a}{2}-\mu Ce^{\frac{(q-2)N\tau}{2}}\int_{\R^N}|u|^q\mathrm{d}x.
	\end{align}
	Since $q\in(2,2+\frac{4s}{N})$, increasing $|\tau|$ if necessary, we have
	$$\frac{1}{2}e^{2s\tau}\int_{\R^N}|(-\Delta)^{\frac{s}{2}}u|^{2}\mathrm{d}x-\mu Ce^{\frac{(q-2)N\tau}{2}}\int_{\R^N}|u|^q\mathrm{d}x=K_\tau<0.$$
Hence, we obtain
	$$J(\tau\ast u)\leq K_{\tau}+\frac{\eta a}{2}<0$$
and then $E_{a}<0$. In particular, we have $E_a<\frac{\eta a}{2}$. The proof is complete.
\end{proof}
In the following, we adopt some idea introduced in \cite{Zhon} to get the sub-additive inequality.
\begin{lemma}\lab{3}
	For $\mu>0,\eta\le0$ and let $a,b>0$, then
	\begin{itemize}
		\item [$(i)$] $a\mapsto E_{a}$ is nonincreasing,
		\item [$(ii)$] $a\mapsto E_{a}$ is continuous,
		\item [$(iii)$] $E_{a+b}\le E_{a}+E_{b}$. If $E_a$ or $E_b$ can be attained, then $E_{a+b}<E_a+E_b$.
	\end{itemize}
\end{lemma}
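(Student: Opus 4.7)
The plan is to base all three assertions on a single dilation: for $u\in S_a$ and $\sigma>0$, set $v(x):=u(\sigma x)$. The standard scalings $\int v^2\,\ud x=\sigma^{-N}\int u^2\,\ud x$, $\int|(-\Delta)^{s/2}v|^2\,\ud x=\sigma^{2s-N}\int|(-\Delta)^{s/2}u|^2\,\ud x$ and $\int F(v)\,\ud x=\sigma^{-N}\int F(u)\,\ud x$ yield, on choosing $\sigma=(a/a')^{1/N}$ (so that $v\in S_{a'}$), the identity
\begin{equation*}
J(v)=\frac{a'}{a}\left[\frac{\sigma^{2s}}{2}\int_{\R^N}|(-\Delta)^{s/2}u|^2\,\ud x+\frac{\eta a}{2}-\mu\int_{\R^N}F(u)\,\ud x\right].
\end{equation*}
Whenever $a'\ge a$ one has $\sigma\le 1$, so the bracket is bounded above by $J(u)$, strictly when $u\not\equiv 0$, giving the master inequality $J(v)\le(a'/a)J(u)$.

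For (i), I apply the master inequality to a minimizing sequence $u_n\in S_a$ with $J(u_n)\to E_a$; since $E_a<0$ by Lemma~\ref{2} and $a'/a\ge 1$, one gets $E_{a'}\le J(v_n)\le(a'/a)J(u_n)\to(a'/a)E_a\le E_a$, giving non-increasingness. The same computation in fact delivers the sharper statement that $\phi(a):=E_a/a$ is non-increasing in $a$, which will be the lever for (iii). For (ii), given $a_n\to a$, I apply the dilation in both directions with $\sigma_n\to 1$ between near-minimizers in $S_a$ and $S_{a_n}$; uniform $H^s$-bounds on minimizing sequences (Lemma~\ref{1}) together with $\sigma_n^{2s},\sigma_n^{-N}\to 1$ give $\limsup E_{a_n}\le E_a\le\liminf E_{a_n}$, hence continuity.

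For the non-strict half of (iii), non-increasingness of $\phi$ combined with the elementary decomposition $(a+b)\phi(a+b)=a\,\phi(a+b)+b\,\phi(a+b)$ and the bounds $\phi(a+b)\le\phi(a)$, $\phi(a+b)\le\phi(b)$ yield
\begin{equation*}
E_{a+b}\le a\,\phi(a)+b\,\phi(b)=E_a+E_b.
\end{equation*}
For the strict inequality, assume without loss of generality that $E_a$ is attained by some $u_a\in S_a$. Then $\int|(-\Delta)^{s/2}u_a|^2\,\ud x>0$, so the master inequality applied to $u_a$ itself is strict, giving $J(v)<\tfrac{a+b}{a}E_a$ with $v\in S_{a+b}$; hence $\phi(a+b)<\phi(a)$, which promotes $a\phi(a+b)\le a\phi(a)$ to a strict inequality and delivers $E_{a+b}<E_a+E_b$.

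The main subtlety is that in the sequence argument for (i) and for the non-strict half of (iii) one can only work with approximate minimizers, along which the improvement $\tfrac{1-\sigma^{2s}}{2}\int|(-\Delta)^{s/2}u_n|^2\,\ud x$ may degenerate to zero; this is precisely why strict monotonicity and strict subadditivity can only be claimed when an actual minimizer is present to pin this improvement away from zero.
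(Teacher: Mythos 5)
Your proof is correct, and for parts (i) and (iii) it takes a genuinely different and more economical route than the paper. For (i) the paper does not dilate at all: it takes compactly supported $\varepsilon$-minimizers $u\in S_a$ for $J$ and $v\in S_{b-a}$ for the free functional $J_0$, translates $v$ far away, estimates the cross term $\iint_{\R^{2N}}(u(x)-u(y))(\tilde v(x)-\tilde v(y))|x-y|^{-N-2s}\,\ud x\,\ud y$, and concludes from $\Upsilon_{b-a}<0$; your single dilation replaces all of this and moreover yields the stronger sub-homogeneity $E_{a'}\le (a'/a)E_a$ for $a'\ge a$ at no extra cost. That matters, because obtaining exactly this sub-homogeneity is the most laborious step of the paper's part (iii): there, an $\varepsilon$-minimizer is taken in $S_a\cap P$ (a selection on the Pohozaev set that the paper does not actually justify), $J(u(\nu^{-1/N}\cdot))$ is differentiated in $\nu$, the Pohozaev identity and $(f_3)$ are used to show the derivative is below $J(u)$, and the resulting local inequality $E_{\theta a}\le\theta E_a$ for $\theta\in(1,1+\xi)$ is then propagated to all $\theta>1$ by iterating $E_{(1+k_0)c}\le(1+k_0)E_c$. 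Your one-line observation that $\sigma^{2s}\le1$ bounds the bracket by $J(u)$ gives the same conclusion for every $\theta\ge1$ with no recourse to $P$, to $(f_3)$, or to the iteration. The strictness mechanism when a minimizer exists is identical in both proofs (a nonzero minimizer has positive Gagliardo seminorm), as is the closing algebra $E_{a+b}\le\frac{a+b}{a}E_a=E_a+\frac{b}{a}E_{\frac{a}{b}b}\le E_a+E_b$. Part (ii) is essentially the paper's argument (the paper dilates for one inequality and rescales the amplitude for the other; you dilate in both directions, which is immaterial); the only point worth spelling out there is that the coercivity estimate behind Lemma~\ref{1} holds uniformly for masses in a neighbourhood of $a$, so that near-minimizers in $S_{a_n}$ are bounded in $H^s(\R^N)$ and $E_{a_n}$ is bounded below — both immediate from the Gagliardo--Nirenberg bound, but needed to pass to the limit in the dilation identity.
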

\begin{proof}
	$(i)$. For any $\varepsilon>0$ small,  there exist $u\in S_{a}\cap C^{\infty}_0(\RN)$ and $v\in S_{b-a}\cap C^{\infty}_0(\RN)$ such that
	$$J(u)\leq E_{a}+\varepsilon,\ \ \ J_0(v)\le \Upsilon_{b-a}+\varepsilon.$$
    Since $u$ and $v$ have compact support, by using parallel translation, we can take $R$ large enough satisfying
	$$
	\tilde{v}(x)=v(x-R),\quad supp\ u \cap supp\ \tilde{v} =\emptyset.
	$$
	Then $u+\tilde{v}\in S_{b}$ and
	\begin{align*}
		E_{b}\le J(u+\tilde{v})=&\frac{1}{2}\iint_{\R^{2N}}\frac{|(u+\tilde{v})(x)-(u+\tilde{v})(y)|^2}{|x-y|^{N+2s}}\mathrm{d}x\mathrm{d}y+\frac{\eta}{2}|u+\tilde{v}|^2_2-\mu\int_{\RN}F(u+\tilde{v})\mathrm{d}x\\
		=&J(u)+J(\tilde{v})+\iint_{\R^{2N}}\frac{(u(x)-u(y))(\tilde{v}(x)-\tilde{v}(y))}{|x-y|^{N+2s}}\mathrm{d}x\mathrm{d}y,
	\end{align*}
	Suppose that
	$$supp\ u\subset B_R(0)\ \ \mbox{and}  \ \ supp\ \tilde{v}\subset B_{3R}(0)\backslash B_{2R}(0),$$
	we obtain
	\begin{align*}
		\iint_{\R^{2N}}\frac{(u(x)-u(y))(\tilde{v}(x)-\tilde{v}(y))}{|x-y|^{N+2s}}\mathrm{d}x\mathrm{d}y=&\iint_{\R^{2N}}\frac{u(x)\tilde{v}(x)-2u(x)\tilde{v}(y)+u(y)\tilde{v}(y)}{|x-y|^{N+2s}}\mathrm{d}x\mathrm{d}y\\
		=&\iint_{\R^{2N}}\frac{-2u(x)\tilde{v}(y)}{|x-y|^{N+2s}}\mathrm{d}x\mathrm{d}y,
	\end{align*}
	Noting that $|x-y|\geq R$ large enough, we have
	\begin{align}\label{dvvv}
		E_{b}\le J(u+\tilde{v})\le J(u)+J(\tilde{v})+\varepsilon\le J(u)+J_0(v)+\varepsilon\le E_{a}+\Upsilon_{b-a}+3\varepsilon\le E_a+3\varepsilon.
	\end{align}
 Here we used the fact $\Upsilon_{b-a}<0$. Then by \eqref{dvvv} and the arbitrariness of $\varepsilon$, we obtain that $E_{b}\le E_{a}$ for any $b>a>0$.
	
		$(ii)$. We prove the following two claims.
	\vskip3pt
	{\bf {Claim 1:}}  $\lim\limits_{h\rightarrow0^+}E_{a-h}\le E_{a}$.
	
	For $\varepsilon>0$, by the definition of $E_{a}$, there exists $u\in S_{a}$ such that
	\begin{equation}\label{33}
		E_{a}\le J(u)\le E_{a}+\varepsilon.
	\end{equation}
	Setting $$t=t(h)=(\frac{a-h}{a})^{\frac{1}{N}}$$
	and $u_t(x)=u(\frac{x}{t})$, we get
	\begin{equation}\label{333}
		\lim\limits_{h\rightarrow0^+}t=1 \ \ \mbox{and} \ \ |u_t|^2_2=t^Na=a-h.
	\end{equation}
	Then, by using $(i)$, we have $J(u_t)\ge E_{a-h}$. In addition,
	\begin{align*}
		J(u_t)=&\frac{t^{N-2s}}{2}\int_{\R^N}|(-\Delta)^{\frac{s}{2}}u|^{2}\mathrm{d}x+\frac{\eta t^N}{2}\int_{\RN}u^2\mathrm{d}x-\mu t^N\int_{\RN}F(u)\mathrm{d}x\\
		=&\frac{t^{N-2s}}{2}\int_{\R^N}|(-\Delta)^{\frac{s}{2}}u|^{2}\mathrm{d}x+t^N(J(u)-\frac{1}{2}\int_{\R^N}|(-\Delta)^{\frac{s}{2}}u|^{2}\mathrm{d}x)\\
		=&t^NJ(u)+\frac{t^{N-2s}(1-t^{2s})}{2}\int_{\R^N}|(-\Delta)^{\frac{s}{2}}u|^{2}\mathrm{d}x
	\end{align*}
	by \eqref{33} and \eqref{333}, we obtain
	$$\lim\limits_{h\rightarrow0^+}E_{a-h}\le E_{a}+\varepsilon.$$
Since $\varepsilon$ is arbitrary, the claim holds.
	\vskip0.1in
	 {\bf {Claim 2:}} $\lim\limits_{h\rightarrow0^+}E_{a+h}\ge E_{a}$,
	
	Actually, we consider the case $h=\frac{1}{n},n\in\mbox{N}$. Take $u_n\in S_{a+\frac{1}{n}}$ such that $J(u_n)\le E_{a+\frac{1}{n}}+\frac{1}{n}$. Set $$v_n(x):=\sqrt{\frac{na}{na+1}}u_n(x).$$
	By Lemma \ref{1}, we know $\{u_n\}$ is bounded in $H^s(\RN)$. Morever, we have
	$$|v_n|^2_2=\frac{na}{na+1}|u_n|^2_2=\frac{na}{na+1}(a+\frac{1}{n})=a.
	$$
	Hence, we get $u_n\in S_a$. On the other hand,
	$$||v_n-u_n||_{H^s(\RN)}=(1-\sqrt{\frac{na}{na+1}})||u_n||_{H^s(\RN)}\rightarrow 0\ \ \mbox{as}\ \ n\rightarrow+\infty,$$
	Then
	$$E_a\le\liminf\limits_{n\rightarrow+\infty}J(v_n)=\liminf\limits_{n\rightarrow+\infty}[J(u_n)+o_n(1)]=\lim\limits_{h\rightarrow0^+}E_{a+h}.$$
	Thus, we obtain that
	$$\lim\limits_{h\rightarrow0^+}E_{a+h}\ge E_{a}.$$
	Moreover, $E_{a-h}\ge E_{a}\ge E_{a+h}$ holds due to $(i)$. Hence, we get
	$$\lim\limits_{h\rightarrow0^+}E_{a-h}\ge E_a\ge \lim\limits_{h\rightarrow0^+}E_{a+h}.$$
	We complete the proof of $(ii)$.

	$(iii)$. Firstly, we prove that $$E_{\theta a}\le \theta E_{a} \ \mbox{for}\  \theta>1 \ \mbox{closing to}\  1.$$
	For any $\varepsilon>0$, we take $u\in S_a\cap P$ such that
	$$J(u)\le E_a+\varepsilon.$$
	Setting $\tilde{u}(x)=u(\nu^{-\frac{1}{N}}x)$ for $\nu\ge1$, by the assumption, we have $|\tilde{u}|^2_2=\nu a$ and
	\begin{align*}
		J(\tilde{u})=&\frac{1}{2}\int_{\R^N}|(-\Delta)^{\frac{s}{2}}\tilde{u}|^{2}\mathrm{d}x+\frac{\eta}{2}\int_{\RN}\tilde{u}^2\mathrm{d}x-\mu\int_{\R^N}F(\tilde{u})\mathrm{d}x\\
		=&\frac{1}{2}\nu^{\frac{N-2s}{N}}\int_{\R^N}|(-\Delta)^{\frac{s}{2}}u|^{2}\mathrm{d}x+\frac{\eta\nu}{2}\int_{\RN}u^2\mathrm{d}x-\mu\nu\int_{\R^N}F(u)\mathrm{d}x.
	\end{align*}
	Then, we get that
	\begin{align*}
		\frac{d}{d\nu}J(\tilde{u})=\frac{N-2s}{2N}\nu^{-\frac{2s}{N}}\int_{\R^N}|(-\Delta)^{\frac{s}{2}}u|^{2}\mathrm{d}x+\frac{\eta}{2}\int_{\RN}u^2\mathrm{d}x-\mu\int_{\RN}F(u)\mathrm{d}x.
	\end{align*}
	Since $u\in P$, we know
	$$\int_{\R^N}|(-\Delta)^{\frac{s}{2}}u|^2\mathrm{d}x+\frac{N\mu}{s}\int_{\RN}F(u)-\frac{N\mu}{2s}\int_{\RN}f(u)u\mathrm{d}x=0.$$
	Thus
	\begin{align*}
		\frac{d}{d\nu}J(\tilde{u})-J(u)=&(\frac{N-2s}{2N}\nu^{-\frac{2s}{N}}-\frac{1}{2})\int_{\R^N}|(-\Delta)^{\frac{s}{2}}u|^{2}\mathrm{d}x.\\
		=&(\frac{N-2s}{2N}\nu^{-\frac{2s}{N}}-\frac{1}{2})\frac{N\mu}{s}\int_{\RN}[\frac{1}{2}f(u)u-F(u)]\mathrm{d}x\\
		=&(\frac{N-2s}{2s}\mu\nu^{-\frac{2s}{N}}-\frac{N\mu}{2s})\int_{\RN}[\frac{1}{2}f(u)u-F(u)]\mathrm{d}x.
	\end{align*}
	Obviously, if $\xi>0$ small, it follows that
	\begin{equation}\lab{111}
		\frac{N-2s}{2s}\mu\nu^{-\frac{2s}{N}}-\frac{N\mu}{2s}<0,\ \mbox{for}\ \nu\in[1,1+\xi].
	\end{equation}
	Then by $\eqref{111}$ and $(f_3)$, we obtain that
	\begin{align*}
		\frac{d}{d\nu}J(\tilde{u})-J(u)\le(\frac{N-2s}{2s}\mu\nu^{-\frac{2s}{N}}-\frac{N\mu}{2s})(\frac{\alpha-2}{2})\int_{\RN}F(u)\mathrm{d}x<0,
	\end{align*}
	Namely,
	$$\frac{d}{d\nu}J(\tilde{u})-J(u)<0,\ \mbox{for }\  \forall\nu\in[1,1+\xi].$$
	Therefore, for any $\theta\in(1,1+\xi)$, we have
	$$J(\tilde{u})-J(u)=\int_{1}^{\theta}\frac{d}{d\nu}J(\tilde{u})d\nu<\int_{1}^{\theta}J(u)d\nu=J(u)(\theta-1).$$
Then,  it is easy to see that
	$$E_{\theta a}\le J(\tilde{u})\leq\theta J(u)\le\theta (E_{a}+\varepsilon),$$
Since the arbitrariness of $\varepsilon$, we get $$E_{\theta a}\le\theta E_{a},\ \theta\in(1,1+\xi).$$
and if $E_a$ is attained, we can take $u$ as a minimizer in the above step, then we have
	$$E_{\theta a}\le J(\tilde{u})<\theta J(u)=\theta E_{a},\ \ \theta\in(1,1+\xi).$$
 Furthermore, following the proof of $(i)$, since $E_a$ is nonincreasing, if $E_a<0$, for any $b\in(a,+\infty)$, we can get some uniform $\xi>0$ satisfying
	$$E_{\theta c}\le\theta E_{c},\ \forall\theta\in[1,1+\xi),\forall c\in[a,b].$$
Now, for any $a>0$ with $E_a<0$ and $\theta>1$, we take $\xi>0$ such that$$E_{(1+k)c}\le(1+k)E_{c},\forall k\in[0,\xi),\forall c\in[a,\theta b]$$
	Then, we may choose $k_0\in(0,\xi)$ and $n\in\mathbb{N}$ such that
	$$(1+k_0)^n<\theta<(1+k_0)^{n+1},$$
	and so
	\begin{align*}
		E_{\theta a}=E_{(1+k_0)\frac{\theta}{1+k_0}a}\le&(1+k_0)E_{\frac{\theta}{1+k_0}a}\le(1+k_0)^2E_{\frac{\theta}{(1+k_0)^2}a}\\
		\le&(1+k_0)^nE_{\frac{\theta}{(1+k_0)^n}a}\le(1+k_0)^n\frac{\theta}{(1+k_0)^n}E_{a}=\theta E_{a}.
	\end{align*}
	Then, if $E_a$ is attained, we get that $E_{\theta a}<\theta E_{a}$ for any $\theta>1$. For $0<b\le a$, we obtain that
$$
	E_{a+b}=E_{\frac{a+b}{a}a}\le\frac{a+b}{a}E_{a}=E_{a}+\frac{b}{a}E_{a}=E_a+\frac{b}{a}E_{\frac{a}{b}b}\le E_{a}+E_{b}.
$$
If $E_a$ or $E_b$ is attained, we get
\begin{equation}\lab{aab}
E_{a}=E_{\frac{a}{b}b}<\frac{a}{b}E_{b},
\end{equation}
and then $E_{a+b}<E_{a}+E_{b}.$
The proof is complete.
\end{proof}

The next compactness lemma on $S_a$ is useful in the study of the autonomous problem as well as non-autonomous problem.
\begin{lemma}\lab{2.4}
	Let $\{u_n\}\subset S_a$ be a minimizing sequence with respect to $E_{a}$. Then, for some subsequence, one of the following alternatives holds:
	\begin{itemize}
		\item [$(i)$] $\{u_n\}$ is strongly convergent;
		\item [$(ii)$]  There exists $\{y_n\}\subset S_a$ with $|y_n|\to\infty$ such that the sequence $v_n(x)=u_n(x+y_n)$ is strongly convergent to a function $v\in S_a$ with $J(v)=E_{a}$.
	\end{itemize}
\end{lemma}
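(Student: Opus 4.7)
The plan is to follow a concentration--compactness argument that relies crucially on the (strict) subadditivity established in Lemma \ref{3}. First, by Lemma \ref{1} the minimizing sequence $\{u_n\}$ is bounded in $H^s(\RN)$. Set
$$\delta:=\limsup_{n\to\iy}\sup_{y\in\RN}\int_{B_1(y)}|u_n|^2\,\ud x.$$
The vanishing case $\delta=0$ is ruled out as follows: the fractional version of Lions' lemma yields $u_n\rg 0$ in $L^r(\RN)$ for every $r\in(2,2_s^*)$, and since $|F(t)|\le C(|t|^q+|t|^p)$ with $q,p\in(2,2+\frac{4s}{N})\subset(2,2_s^*)$, we deduce $\int_{\RN}F(u_n)\,\ud x\rg 0$. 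Consequently
$$E_a=\lim_{n\to\iy}J(u_n)\ge \frac{\eta a}{2},$$
which contradicts $E_a<\frac{\eta a}{2}$ from Lemma \ref{2}. Hence $\delta>0$ and one may select $\{y_n\}\subset\RN$ with $\int_{B_1(y_n)}|u_n|^2\,\ud x\ge\delta/2$.

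Set $v_n(x):=u_n(x+y_n)$. Along a subsequence $v_n\rhu v$ in $H^s(\RN)$ and $v_n\rg v$ a.e.\ in $\RN$, with $v\neq 0$ because of the local mass bound. The heart of the argument is to prove $|v|_2^2=a$. Writing $w_n:=v_n-v$ and $c:=|v|_2^2\in(0,a]$, Brezis--Lieb applied in turn to the $L^2$-norm, to the Gagliardo seminorm, and to $\int F(\cdot)$ (using a.e.\ convergence, uniform $H^s$-boundedness, and the subcritical bound on $F$) yields
$$J(v_n)=J(v)+J(w_n)+o(1),\qquad |w_n|_2^2\rg a-c.$$
If $c<a$, then for large $n$ we have $w_n\in S_{|w_n|_2^2}$ with $|w_n|_2^2>0$; combining continuity of $a\mapsto E_a$ from Lemma \ref{3}(ii) with $J(w_n)\ge E_{|w_n|_2^2}$ gives $\liminf_n J(w_n)\ge E_{a-c}$. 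Together with $J(v)\ge E_c$ this produces
$$E_a=\liminf_{n\to\iy}J(v_n)\ge J(v)+E_{a-c}\ge E_c+E_{a-c}.$$
The reverse inequality is provided by Lemma \ref{3}(iii), hence every inequality is an equality, $J(v)=E_c$, and $E_c$ is attained by $v$. But then the strict form of Lemma \ref{3}(iii) forces $E_a<E_c+E_{a-c}$, a contradiction. Therefore $c=a$.

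Once $|v_n|_2\rg|v|_2$ and $v_n\rhu v$ in $L^2(\RN)$, one has $v_n\rg v$ in $L^2(\RN)$; interpolating with the uniform $H^s$-bound yields $v_n\rg v$ in $L^r(\RN)$ for every $r\in[2,2_s^*)$, so $\int F(v_n)\rg\int F(v)$. Since $v\in S_a$, the chain $J(v)\le\liminf_n J(v_n)=E_a\le J(v)$ gives $J(v)=E_a$ and, as a by-product, $\|(-\Delta)^{s/2}v_n\|_2\rg\|(-\Delta)^{s/2}v\|_2$, which upgrades the weak convergence to strong convergence in $H^s(\RN)$. To dichotomize between (i) and (ii) one inspects $\{y_n\}$: if it is bounded, a further subsequence satisfies $y_n\rg y_0$ and a routine translation argument gives $u_n\rg v(\cdot-y_0)$ strongly in $H^s(\RN)$, which is alternative (i); otherwise a subsequence fulfils $|y_n|\rg\iy$, which is alternative (ii). The main technical obstacle is the Brezis--Lieb type splitting for the whole functional $J$ on translated sequences, but this follows from standard arguments based on a.e.\ convergence and the subcritical polynomial growth of $f$.
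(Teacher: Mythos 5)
Your proof is correct and follows essentially the same concentration--compactness scheme as the paper: vanishing is excluded via the fractional Lions lemma together with $E_a<\tfrac{\eta a}{2}$ from Lemma \ref{2}, dichotomy is excluded via the Brezis--Lieb splitting and the subadditivity of Lemma \ref{3}, and the two alternatives correspond to whether the translations $\{y_n\}$ stay bounded. One point in your favor: where the paper simply asserts the strict inequality $E_d+E_b>E_a$, you correctly observe that strictness in Lemma \ref{3}(iii) requires one of the levels to be attained, and you obtain this attainment by forcing all inequalities in the chain $E_a\ge J(v)+E_{a-c}\ge E_c+E_{a-c}\ge E_a$ to be equalities, so that $J(v)=E_c$; this closes a gap that the paper's own argument leaves implicit.
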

\begin{proof}
	By Lemma \ref{1}, we know $J$ is coercive on $S_a$, the sequence $\{u_n\}$ is bounded, so $u_n\rightharpoonup u$ in $H^s(\R^N)$ for some subsequence. Now we consider the following three possibilities.
	
	$(1)$ If $u\not\equiv0$ and $|u|^2_2=b\neq a$, we must have $b\in(0,a)$. Set $v_n=u_n-u$, by the Br$\acute{e}$zis-Lieb Lemma \cite{Will},
	\begin{align}
		\iint_{\R^{2N}}\frac{|u_n(x)-u_n(y)|^2}{|x-y|^{N+2s}}\mathrm{d}x\mathrm{d}y=&\iint_{\R^{2N}}\frac{|v_n(x)-v_n(y)|^2}{|x-y|^{N+2s}}\mathrm{d}x\mathrm{d}y\nonumber \\
		&+\iint_{\R^{2N}}\frac{|u(x)-u(y)|^2}{|x-y|^{N+2s}}\mathrm{d}x\mathrm{d}y+o_n(1).
	\end{align}
	Since $F$ is a $C^1$ function and has a subcritical growth in the Sobolev sense, then it follows that
	\begin{equation}\lab{qqq2}
		\int_{\R^N}F(u_n)\mathrm{d}x=\int_{\R^{N}}F(u_n-u)\mathrm{d}x+\int_{\R^N}F(u)\mathrm{d}x+o_n(1).
	\end{equation}
	Furthermore, setting $d_n=|v_n|^2_2$, and by using
	$$|u_n|^2_2=|v_n|^2_2+|u_n|^2_2+o_n(1),$$
	we obtain that $d_n\in(0,a)$ for $n$ large enough and $|v_n|^2_2\to d$ with $a=b+d$, we infer that
	\begin{align*}
		E_{a}+o_n(1)=&J(u_n)\\
		=&\frac{1}{2}\iint_{\R^{2N}}\frac{|v_n(x)-v_n(y)|^2}{|x-y|^{N+2s}}\mathrm{d}x\mathrm{d}y+\frac{\eta}{2}|v_n|^2_2-\mu\int_{\R^N}F(v_n)\mathrm{d}x\\
		&+\frac{1}{2}\iint_{\R^{2N}}\frac{|u(x)-u(y)|^2}{|x-y|^{N+2s}}\mathrm{d}x\mathrm{d}y+\frac{\eta}{2}|u|^2_2-\mu\int_{\R^N}F(u)\mathrm{d}x+o_n(1)\\
		=&J(v_n)+J(u)+o_n(1)\\
		\ge&E_{d_n}+E_{b}+o_n(1).
	\end{align*}
	Letting $n\to+\infty$, by Lemma \ref{3}, we find that
	\begin{align*}
		E_{a}\ge&E_{d}+E_{b}>E_{a},
	\end{align*}
	which is a contradiction. This possibility can not exist.
	
	$(2)$ If $|u_n|^2_2=|u|^2_2=a$, it is well known that $u_n\to u$ in $L^2(\R^N)$. Then, by \eqref{2333} and \eqref{23}, we have that
	\begin{align*}
		\int_{\RN}F(u_n-u)\mathrm{d}x\le&C_1\int_{\RN}|u_n-u|^q\mathrm{d}x+C_2\int_{\RN}|u_n-u|^p\mathrm{d}x\\
		\le&C(\int_{\RN}|u_n-u|^2)^{\frac{q}{2}-\frac{N(q-2)}{4s}}+C(\int_{\RN}|u_n-u|^2)^{\frac{p}{2}-\frac{N(p-2)}{4s}}
	\end{align*}
	Hence, we get $\int_{\RN}F(u_n-u)\mathrm{d}x\rightarrow0$. From \eqref{qqq2}, we obtain that
	$$\int_{\R^N}F(u_n)\mathrm{d}x\to\int_{\R^N}F(u)\mathrm{d}x.$$
	which combines with $E_{a}=\lim\limits_{n\to+\infty}J(u_n)$ 
	provide
	\begin{align*}
		E_{a}=&\lim\limits_{n\to+\infty}\frac{1}{2}\int_{\R^N}|(-\Delta)^{\frac{s}{2}}u_n|^{2}+\eta u^2_n)\mathrm{d}x-\mu\int_{\R^N}F(u)\mathrm{d}x\\
		\ge&\frac{1}{2}\int_{\R^N}|(-\Delta)^{\frac{s}{2}}u|^{2}+\eta u^2)\mathrm{d}x-\mu\int_{\R^N}F(u)\mathrm{d}x=J(u)\\
		\ge&E_{a},
	\end{align*}
	Since $u\in S_a$, we infer that $E_{a}=J(u)$, then $||u_n||^2\to||u||^2$,
	where $||\  ||$ denotes the usual norm in $H^s(\R^N)$. Thus $u_n\to u$ in $H^s(\R^N)$, which implies that $(i)$ occurs.
	
	$(3)$ If $u\equiv0$, that is, $u_n\rightharpoonup0$ in $H^s(\R^N)$.
	We claim that there exists $\beta >0$  such that
	\begin{equation}\label{qqqq2}
		\liminf\limits_{n\rightarrow+\infty}\sup\limits_{y\in\RN}\int_{B_R(y)}|u_n|^2\mathrm{d}x\ge\beta,\  \ \mbox{for some}\  R>0.
	\end{equation}
	Indeed, otherwise by \cite[Lemma 2.2]{Felmer}, we have $u_n\to0$ in $L^l(\R^N)$ for all $l\in(2,\frac{2N}{N-2s})$. Thus
	\begin{align*}
		E_{a}+o_n(1)=J(u_n)=&\frac{1}{2}\int_{\R^N}|(-\Delta)^{\frac{s}{2}}u_n|^{2}\mathrm{d}x+\frac{\eta }{2}\int_{\RN}u^2_n\mathrm{d}x-\mu\int_{\R^N}F(u_n)\mathrm{d}x\\
		=&\frac{1}{2}\int_{\R^N}|(-\Delta)^{\frac{s}{2}}u_n|^{2}\mathrm{d}x+\frac{\eta }{2}\int_{\RN}u^2_n\mathrm{d}x+o_n(1)
	\end{align*}
	which contradicts the Lemma \ref{2}.
	
	Hence, from this case, \eqref{qqqq2} holds and $|y_n|\rightarrow+\infty$, then we consider $\tilde{u}_n(x)=u(x+y_n)$, obviously $\{\tilde{u}_n\}\subset S_a$ and it is also a minimizing sequence with respect to $J_{a}$. It is observed that there exists $\tilde{u}\in H^s(\R^N)\backslash\{0\}$ such that $\tilde{u}_n(x)\rightharpoonup\tilde{u}$ in $H^s(\R^N)$.
	Following as in the first two possibilities of the proof, we infer that $\tilde{u}_n(x)\to\tilde{u}$ in $H^s(\R^N)$, which implies that $(ii)$ occurs. This proves the lemma.
\end{proof}
In what follows, we begin to prove Theorem \ref{Th2.1}.
\begin{proof}[{\bf Proof of Theorem \ref{Th2.1}}]
	 By Lemma \ref{1}, Lemma \ref{2}, there exists a bounded minimizing sequence $\{u_n\}\subset S_a$ satisfying $J(u_n)\to E_{a}$. Then applying Lemma \ref{2.4}, there exists $u\in S_a$ such that $J(u)=E_{a}$. By the
	Lagrange multiplier, there exists $\lambda\in\R$ such that
	\begin{equation}\lab{qq2}
	J'(u)=\lambda \Phi'(u)\ \ \mbox{in}\ \ H^s(\RN)',
	\end{equation}
where $\Phi(u):H^s(\RN)\rightarrow \R$ is given by
$$\Phi(u)=\frac{1}{2}\int_{\RN}|u|^2\mathrm{d}x,\ \ \ u\in H^s(\RN).$$
Therefore, from \eqref{qq2}, we have
\begin{equation}\lab{ss}
	(-\Delta)^su+\eta u=\lambda u+\mu f(u)\ \ \mbox{in}\ \ \RN,
\end{equation}
By Lemma \ref{ee}, we can get
\begin{align*}
	(\lambda-\eta)\int_{\RN}u^2\mathrm{d}x=&\int_{\R^N}|(-\Delta)^{\frac{s}{2}}u|^{2}\mathrm{d}x-\mu\int_{\RN}f(u)u\mathrm{d}x\\
	=&-\frac{N\mu}{s}\int_{\RN}F(u)\mathrm{d}x+\frac{N\mu}{2s}\int_{\RN}f(u)u\mathrm{d}x-\mu\int_{\RN}f(u)u\mathrm{d}x\\
	=&-\frac{\mu}{s}[\int_{\RN}NF(u)-\frac{N-2s}{2}f(u)u\mathrm{d}x].
\end{align*}
Furthermore, according to the condition $(f_3)$ and the claim $3$, we must have $\lambda<\eta$.

Next, we will prove that $u$ can be chosen to be positive. Obviously, we  have $J(u)=J(|u|)$. Moreover, since $u\in S_a$ shows that $|u|\in S_a$, we infer that $$E_{a}=J(u)=J(|u|)\ge E_{a}.$$
	which implies that $J(|u|)=E_{a}$, we can replace $u$ by $|u|$. Furthermore, if $u^*$ denotes the Symmetrization radial decreasing rearrangement of $u$ (see \cite[Section 9]{Almgren}), we observe that
	\begin{align}
		\iint_{\R^{2N}}\frac{|u^*(x)-u^*(y)|^2}{|x-y|^{N+2s}}\mathrm{d}x\mathrm{d}y\le\iint_{\R^{2N}}\frac{|u(x)-u(y)|^2}{|x-y|^{N+2s}}\mathrm{d}x\mathrm{d}y\nonumber\\
		\int_{\RN}|u|^2\mathrm{d}x=\int_{\RN}|u^*|^2\mathrm{d}x \ \mbox{and} \ \int_{\RN}F(u)\mathrm{d}x=\int_{\RN}F(u^*)\mathrm{d}x
	\end{align}
	then $u^*\in S_a$ and $J(u^*)=E_{a}$, it follows that we can replace $u$ by $u^*$. Similarly as in \cite{Liu-Zhang}, one can show that $u(x)>0$ for any $x\in\R$. This completes the proof.
	

\end{proof}


\vskip0.1in


\s{The non-autonomous problem}
\renewcommand{\theequation}{3.\arabic{equation}}

In this section, we first give some properties of the functional $I_\epsilon(u)$ given  by \eqref{xx4} restricted to the sphere $S_a$, and then prove Theorem \ref{Th1}. Define the following energy functionals
$$
I_{\infty}(u)=\frac{1}{2}\int_{\R^N}|(-\Delta)^{\frac{s}{2}}u|^{2}\mathrm{d}x-h_{\infty}\int_{\R^N}F(u)\mathrm{d}x
$$
and for $i=1,2,\cdots, k$,
\begin{align*}
	I_{a_i}(u)=&\frac{1}{2}\int_{\R^N}|(-\Delta)^{\frac{s}{2}}u|^{2}\mathrm{d}x+\frac{V(a_i)}{2}\int_{\RN}u^2\mathrm{d}x-h(a_i)\int_{\R^N}F(u)\mathrm{d}x.
\end{align*}
Moreover, denoted by $E_{\epsilon,a}$, $E_{a_i,a}$ and $E_{\infty,a}$ the following real numbers
$$E_{\epsilon,a}=\inf\limits_{u\in S_a}I_\epsilon(u),\ \ E_{a_i,a}=\inf\limits_{u\in S_a}I_{a_i}(u),\ \ E_{\infty,a}=\inf\limits_{u\in S_a}I_\infty(u).$$

The next two lemmas establish some crucial relations involving the levels $E_{\epsilon,a}$, $E_{\infty,a}$ and $E_{a_i,a}$. 
For any $\alpha,\beta\in\R$, set
$$J_{\alpha\beta}(u)=\frac{1}{2}\int_{\R^N}|(-\Delta)^{\frac{s}{2}}u|^{2}\mathrm{d}x+\frac{\beta}{2}\int_{\RN}u^2\mathrm{d}x-\alpha\int_{\R^N}F(u)\mathrm{d}x=E_{h_1V_1,a}.$$
	where
	$$E_{\alpha\beta,a}=\inf\limits_{u\in S_a}J_{\alpha\beta}(u),$$
\begin{lemma}\lab{q3}
	Fix $a>0$, let $0<h_1<h_2$ and $V_2<V_1\leq0$. Then $E_{h_2V_2,a}<E_{h_1V_1,a}<0$.
\end{lemma}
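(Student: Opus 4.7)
The lemma splits into two assertions: (i) $E_{h_1V_1,a}<0$, and (ii) $E_{h_2V_2,a}<E_{h_1V_1,a}$. My plan is to obtain (i) by simply invoking the autonomous result of Lemma \ref{2}, and (ii) by a one-line pointwise comparison of the functionals on $S_a$.

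For (i), observe that $J_{h_1V_1}$ is exactly the autonomous functional $J$ appearing in Section 2 with the choices $\mu=h_1>0$ and $\eta=V_1\le 0$. Hence Lemma \ref{2} (applied with these parameters) directly yields
\[
E_{h_1V_1,a}<\frac{V_1 a}{2}\le 0.
\]
No additional work is required here; the scaling $(\tau\ast u)(x)=e^{N\tau/2}u(e^{\tau}x)$ combined with $(f_1)$ was already exploited there to push the energy below $\eta a/2$.

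For (ii), I would use the key observation that $(f_3)$ forces $F(t)\ge 0$ for every $t\in\R$ (by oddness of $f$ and $0<\alpha F(t)\le tf(t)$ for $t>0$, together with $F(0)=0$). Then for every $u\in S_a$, a direct subtraction gives
\[
J_{h_2V_2}(u)-J_{h_1V_1}(u)=\frac{V_2-V_1}{2}\int_{\RN}u^2\,\mathrm{d}x-(h_2-h_1)\int_{\RN}F(u)\,\mathrm{d}x=\frac{(V_2-V_1)a}{2}-(h_2-h_1)\int_{\RN}F(u)\,\mathrm{d}x.
\]
Since $h_2-h_1>0$ and $F(u)\ge 0$, the second term is $\le 0$; since $V_2-V_1<0$ and $a>0$, the first term is a strictly negative constant independent of $u$. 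Therefore
\[
J_{h_2V_2}(u)\le J_{h_1V_1}(u)+\frac{(V_2-V_1)a}{2}\qquad\text{for all } u\in S_a.
\]
Taking the infimum on $S_a$ yields $E_{h_2V_2,a}\le E_{h_1V_1,a}+(V_2-V_1)a/2<E_{h_1V_1,a}$, which is the desired strict inequality.

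There is essentially no serious obstacle: the only subtlety is justifying strictness in the comparison (ii), and this comes for free from the hypothesis $V_2<V_1$, which produces a uniform negative constant $(V_2-V_1)a/2$ on $S_a$. If instead we had only $V_2\le V_1$ (equality permitted) and had to rely on $h_1<h_2$ alone for strictness, we would need the extra step of showing that a near-minimizer for $J_{h_1V_1}$ satisfies $\int F(u)\,\mathrm{d}x\ge c>0$, which is true but would require invoking $(f_1)$ and the Gagliardo--Nirenberg-type estimates. The statement as given sidesteps that issue entirely.
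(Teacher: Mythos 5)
Your proof is correct, and since the paper simply states that ``the proof is standard'' and omits it, your argument (Lemma \ref{2} with $\mu=h_1$, $\eta=V_1$ for negativity, plus the pointwise comparison $J_{h_2V_2}(u)\le J_{h_1V_1}(u)+\tfrac{(V_2-V_1)a}{2}$ on $S_a$ using $F\ge 0$, then passing to the infimum, which is finite by Lemma \ref{1}) is exactly the standard route being alluded to. Your closing observation is also well taken: the later application of this lemma in Lemma \ref{kkk3} actually needs strictness when only one of the two parameters is strictly varied, which is the harder case you correctly identify as requiring a uniform lower bound on $\int_{\RN}F(u)\,\mathrm{d}x$ along near-minimizers.
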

\begin{proof}
The proof is standard and we omit the details.
\end{proof}

\begin{lemma}\lab{t3}
	$\limsup\limits_{\epsilon\rightarrow0^+}E_{\epsilon,a}\le E_{a_i,a}<E_{\infty,a}<0, i=1,2,\cdots,k$.
\end{lemma}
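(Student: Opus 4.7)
My plan is to split the claim into three parts: (a) $\limsup_{\epsilon\to 0^{+}} E_{\epsilon,a}\le E_{a_{i},a}$, (b) $E_{a_{i},a}<E_{\infty,a}$, and (c) $E_{\infty,a}<0$. The last two inequalities drop out of Lemma~\ref{q3}: with $h_{2}=h(a_{i})$, $V_{2}=V(a_{i})$, $h_{1}=h_{\infty}$ and $V_{1}=0$, assumption $(A_{1})$ gives $0<h_{\infty}<h(a_{i})$ while $(A_{2})$ gives $V(a_{i})<0=V_{1}$, so Lemma~\ref{q3} produces at once
\[
E_{a_{i},a}=E_{h(a_{i})V(a_{i}),a}<E_{h_{\infty}\cdot 0,a}=E_{\infty,a}<0.
\]

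For (a) the strategy is to test $I_{\epsilon}$ on a spatial translate of a minimizer for $I_{a_{i}}$ on $S_{a}$. By Theorem~\ref{Th2.1}, applied with $\eta=V(a_{i})$ and $\mu=h(a_{i})$, there exists $w_{i}\in S_{a}$ realising $I_{a_{i}}(w_{i})=E_{a_{i},a}$. I will set
\[
u_{\epsilon,i}(x):=w_{i}(x-a_{i}/\epsilon), \qquad x\in\R^{N}.
\]
Translation invariance of both $\|\cdot\|_{2}$ and the Gagliardo seminorm ensures $u_{\epsilon,i}\in S_{a}$, while the change of variables $y=x-a_{i}/\epsilon$ in the potential and weight terms gives
\[
I_{\epsilon}(u_{\epsilon,i})=\frac{1}{2}\int_{\R^{N}}|(-\Delta)^{\frac{s}{2}}w_{i}|^{2}\,\ud y+\frac{1}{2}\int_{\R^{N}}V(\epsilon y+a_{i})\,w_{i}^{2}\,\ud y-\int_{\R^{N}}h(\epsilon y+a_{i})\,F(w_{i})\,\ud y.
\]

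The final step is to pass to the limit $\epsilon\to 0^{+}$ by dominated convergence. The two non-kinetic integrands converge pointwise to $V(a_{i})\,w_{i}(y)^{2}$ and $h(a_{i})\,F(w_{i}(y))$. Since $V$ is continuous on $\R^{N}$ with $V(x)\to 0$ as $|x|\to\infty$ (hence $\|V\|_{\infty}<\infty$) and $h$ is bounded by $(A_{1})$, the majorants $\|V\|_{\infty}w_{i}^{2}$ and $\|h\|_{\infty}|F(w_{i})|$ both lie in $L^{1}(\R^{N})$: for the latter, $|F(t)|\le C(|t|^{q}+|t|^{p})$ from $(f_{1})$--$(f_{2})$ combined with $w_{i}\in H^{s}(\R^{N})\hookrightarrow L^{q}(\R^{N})\cap L^{p}(\R^{N})$ suffices. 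Hence $I_{\epsilon}(u_{\epsilon,i})\to I_{a_{i}}(w_{i})=E_{a_{i},a}$, and since $E_{\epsilon,a}\le I_{\epsilon}(u_{\epsilon,i})$ for every $\epsilon>0$, (a) follows. No serious obstruction is anticipated; the only point requiring careful bookkeeping is producing the $L^{1}$ majorants so that DCT applies.
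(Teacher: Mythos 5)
Your proposal is correct and follows essentially the same route as the paper: translate a minimizer $w_i$ of $I_{a_i}$ on $S_a$ by $a_i/\epsilon$, pass to the limit by dominated convergence to get $\limsup_{\epsilon\to0^+}E_{\epsilon,a}\le E_{a_i,a}$, and deduce $E_{a_i,a}<E_{\infty,a}<0$ from the comparison of constants $h_\infty<h(a_i)$, $V(a_i)<0$ as in Lemma \ref{q3}. Your explicit construction of the $L^1$ majorants for the DCT step is a welcome addition that the paper leaves implicit.
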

\begin{proof}
	By the proof of the Theorem \ref{Th2.1}, choose $u_0\in S_a$ such that $I_{a_i}(u_0)=E_{a_i,a}$. For $1\le i\le k$, we define $$u=u_0(x-\frac{a_i}{\epsilon}),\ x\in\RN.$$
Then $u\in S_a$ for all $\epsilon>0$, we have
	\begin{align*}
		E_{\epsilon,a}\le I_\epsilon(u)=\frac{1}{2}|(-\Delta)^{\frac{s}{2}}u_0|^{2}_2+\frac{1}{2}\int_{\RN}V(\epsilon x+a_i)u^2_0\mathrm{d}x-\int_{\R^N}h(\epsilon x+a_i)F(u_0)\mathrm{d}x.
	\end{align*}
	Letting $\epsilon\rightarrow0^+$, by the Lebesgue dominated convergence theorem, we deduce
	\begin{equation}\lab{zz}
		\limsup\limits_{\epsilon\rightarrow0^+}E_{\epsilon,a}\le\lim\limits_{\epsilon\rightarrow0^+}I_{\epsilon}(u)=I_{a_i}(u_0)=E_{a_i,a}.
	\end{equation}
Noting that $E_{\infty,a}$ can be achieved, due to $0<h_{\iy}<h(a_i)$ and $V(a_i)<0$, we have
 $$E_{a_i,a}<E_{\infty,a}<0.$$
It completes the proof.
\end{proof}

Hence by Lemma \ref{t3}, there exists $\epsilon_{1}>0$ satisfying $E_{\epsilon,a}<E_{\infty,a}$ for all $\epsilon\in(0,\epsilon_{1})$, In the following, we always assume that $\epsilon\in(0,\epsilon_{1})$. The next three lemmas will be used to prove the $(PS)_c$ condition for $I_{\epsilon}$ restricts to $S_a$ at some levels.
\begin{lemma}\label{L}
	Assume $\{u_n\}\subset S_a$ such that $I_{\epsilon}(u_n)\rightarrow c$ as $n\rightarrow+\infty$ with $c<E_{\infty,a}<0$, then
	$$\delta:=\liminf\limits_{n\rightarrow\infty}\sup\limits_{y\in\RN}\int_{B(y,1)}|u_n(x)|^2\mathrm{d}x>0.$$
\end{lemma}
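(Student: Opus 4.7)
The approach is the standard Lions concentration argument adapted to the fractional setting. I argue by contradiction: assume $\delta=0$, i.e.\
$$\lim_{n\to\infty}\sup_{y\in\RN}\int_{B(y,1)}|u_n(x)|^2\,\ud x=0.$$
Since $\{u_n\}\subset S_a$ and $I_\epsilon(u_n)\to c<0$, Lemma \ref{1}-type coercivity arguments (using $(f_1)$, $(f_2)$ and the fractional Gagliardo--Nirenberg inequality) show that $\{u_n\}$ is bounded in $H^s(\RN)$. Then the fractional vanishing lemma (cf.\ \cite[Lemma 2.2]{Felmer}) applied to a bounded, locally vanishing sequence yields $u_n\to 0$ in $L^r(\RN)$ for every $r\in(2,\frac{2N}{N-2s})$.

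Next I show that each of the non-kinetic terms in $I_\epsilon(u_n)$ vanishes in the limit. By $(f_1)$ and $(f_2)$ one has the uniform bound $|F(t)|\le C(|t|^q+|t|^p)$ with $q,p\in(2,\bar p)\subset(2,\frac{2N}{N-2s})$; combined with the $L^r$-convergence just established and boundedness of $h$, this gives
$$\int_{\RN}h(\epsilon x)F(u_n)\,\ud x\longrightarrow 0.$$
For the potential term I use $(A_2)$: since $V$ is continuous and $V(x)\to 0$ as $|x|\to\infty$, $V$ is bounded and, for every $\eta>0$, there is $R_\eta>0$ with $|V(y)|<\eta$ for $|y|>R_\eta$. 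Splitting the integral at $|\epsilon x|=R_\eta$, the outer piece is bounded by $\eta a$, while the inner piece equals $\int_{|x|\le R_\eta/\epsilon}V(\epsilon x)u_n^2\,\ud x$; covering the fixed ball $B(0,R_\eta/\epsilon)$ by a finite number (depending only on $\eta,\epsilon$) of unit balls and invoking the vanishing hypothesis, this inner integral tends to $0$ as $n\to\infty$. Sending $\eta\to 0^+$ gives
$$\int_{\RN}V(\epsilon x)u_n^2\,\ud x\longrightarrow 0.$$

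Combining these two facts,
$$c=\lim_{n\to\infty}I_\epsilon(u_n)=\lim_{n\to\infty}\tfrac12\int_{\RN}|(-\Delta)^{s/2}u_n|^2\,\ud x\ \ge\ 0,$$
which contradicts $c<E_{\infty,a}<0$ and proves $\delta>0$.

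I expect the one subtle point to be the treatment of the potential term: the hypothesis only gives \emph{local} vanishing of $u_n$, not strong $L^2$-convergence, so one has to exploit the decay $V(\infty)=0$ together with a finite-cover argument on the fixed (in $n$) ball $B(0,R_\eta/\epsilon)$ to absorb the inner part. The rest is routine estimation based on $(f_1)$--$(f_2)$ and the fractional Lions lemma.
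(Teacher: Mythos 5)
Your proposal is correct and follows essentially the same route as the paper: contradiction via the fractional vanishing lemma of \cite[Lemma 2.2]{Felmer}, showing the nonlinear and potential terms vanish, and concluding $c\ge 0$ against $c<E_{\infty,a}<0$. The only difference is that you spell out the finite-cover/splitting argument for $\int V(\epsilon x)u_n^2\,\mathrm{d}x\to 0$, which the paper merely asserts with ``one can show''; your justification of that step is sound.
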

\begin{proof}
	We argue by contradiction and assume that $\delta=0$, then up to a subsequence, we have $u_n\rightarrow0$ in $L^l(\RN)$ for all $l\in(2,\frac{2N}{N-2s})$, by the Lebesgue dominated convergence theorem and $(f_1)$-$(f_2)$, we infer that
	\begin{equation}\lab{tt3}
		\int_{\RN} h(\epsilon x)F(u_n)\mathrm{d}x\rightarrow0\ \ \mbox{as}\ \ n\rightarrow+\infty.
	\end{equation}
Since $V(x)\rightarrow0$ as $|x|\rightarrow\infty$, one can show that
$$\int_{\RN}V(x)u_n^2dx=o_n(1),$$
which combining with \eqref{tt3} follows that
	\begin{align*}
		0>c=I_{\epsilon}(u_n)+o(1)=\frac{1}{2}\int_{\R^N}|(-\Delta)^{\frac{s}{2}}u_n|^{2}\mathrm{d}x+o(1)\ge 0,
	\end{align*}
which is a contradiction.
\end{proof}
\begin{lemma}\lab{ll3}
	Under the assumption of Lemma \ref{L}, assume $u_n\rightharpoonup u$ in $H^s(\RN)$, then $u\not\equiv0$.
\end{lemma}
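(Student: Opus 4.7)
The plan is to argue by contradiction: assume $u\equiv 0$, so that $u_n\rightharpoonup 0$ in $H^s(\RN)$. Since $\{u_n\}$ is uniformly bounded in $H^s(\RN)$ by (the proof of) Lemma~\ref{1}, the Rellich-type compact embedding gives $u_n\to 0$ in $L^{l}_{\mathrm{loc}}(\RN)$ for every $l\in[2,2_s^{*})$, and the growth conditions $(f_1)$--$(f_2)$ combined with \eqref{2333} and \eqref{23} provide the uniform bound $\int_{\RN}|F(u_n)|\,\mathrm{d}x\le C$.

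The central observation is the pointwise identity
\begin{equation*}
I_\epsilon(u_n)-I_\infty(u_n)=\frac{1}{2}\int_{\RN}V(\epsilon x)u_n^{2}\,\mathrm{d}x-\int_{\RN}\bigl[h(\epsilon x)-h_\infty\bigr]F(u_n)\,\mathrm{d}x,
\end{equation*}
and I would show that both terms on the right-hand side go to $0$ as $n\to\infty$. Given $\eta>0$, by $(A_1)$ and $(A_2)$ choose $R>0$ such that $|V(z)|<\eta$ and $|h(z)-h_\infty|<\eta$ for $|z|>R$, and split each integral along $\{|x|\le R/\epsilon\}$ and its complement. The inner pieces vanish in the limit because $u_n\to 0$ strongly in $L^{2}$, $L^{q}$ and $L^{p}$ on the fixed ball $B(0,R/\epsilon)$, while the outer pieces are dominated by $\eta\, a$ and $\eta\, C$ respectively, uniformly in $n$. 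Sending $\eta\to 0$ produces the claimed decay.

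Consequently $I_\infty(u_n)=I_\epsilon(u_n)+o_n(1)\to c$. But $u_n\in S_a$ forces $I_\infty(u_n)\ge E_{\infty,a}$ for every $n$, whence $c\ge E_{\infty,a}$, contradicting the hypothesis $c<E_{\infty,a}$ from Lemma~\ref{L}. Therefore $u\not\equiv 0$.

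The main obstacle is the joint asymptotic handling of the potential and weight integrals: one has to exploit the decay of $V$ and of $h-h_\infty$ at infinity together with the local strong convergence of $u_n$ to $0$ in the subcritical Lebesgue spaces, taking care that the outer terms are controlled uniformly in $n$ by the mass constraint and by the Gagliardo--Nirenberg bound on $\int|F(u_n)|\,\mathrm{d}x$. Once these two decay estimates are established, the contradiction with the autonomous problem at infinity is immediate, and the concentration profile of Lemma~\ref{L} is not explicitly required for the argument to close.
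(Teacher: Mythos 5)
Your proof is correct, but it follows a genuinely different route from the paper's. The paper invokes the non-vanishing property of Lemma \ref{L} to extract translations $y_n$ with $|y_n|\to\infty$, recenters to $\tilde u_n=u_n(\cdot+y_n)$, and then compares $I_\epsilon(u_n)$ with $I_\infty(\tilde u_n)$ by exploiting the decay of $V$ and of $h-h_\infty$ along the escaping sequence $\epsilon y_n$; the contradiction $c\ge E_{\infty,a}$ then comes from $\tilde u_n\in S_a$. You instead work directly with $u_n$ and show $I_\epsilon(u_n)-I_\infty(u_n)\to 0$ by splitting at the fixed radius $R/\epsilon$, using the compact embedding $H^s\hookrightarrow L^l_{\rm loc}$ (which gives strong local convergence of $u_n$ to $0$ in $L^2$, $L^q$, $L^p$) for the inner region and the decay of $V$ and $h-h_\infty$ plus the uniform bounds $|u_n|_2^2=a$ and $\int|F(u_n)|\le C$ for the outer region; since $I_\infty(u_n)\ge E_{\infty,a}$ for every $u_n\in S_a$, no translation is needed. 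Your version is arguably cleaner: it makes the error estimate fully explicit, it does not rely on Lemma \ref{L} at all (so the lemma's hypothesis could be weakened to $I_\epsilon(u_n)\to c<E_{\infty,a}$ with $\{u_n\}\subset S_a$), and it sidesteps the somewhat delicate justification, left implicit in the paper, that the correction terms $\int(V(\epsilon x+\epsilon y_n)-V_\infty)\tilde u_n^2\,\mathrm{d}x$ and $\int(h_\infty-h(\epsilon x+\epsilon y_n))F(\tilde u_n)\,\mathrm{d}x$ vanish, which itself requires exactly the kind of splitting you perform. What the paper's translation approach buys is reusability: the recentered sequence $\tilde u_n$ and its weak limit are needed again in Lemma \ref{lll3} and Lemma \ref{x4}, so the paper sets up that machinery here.
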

\begin{proof}
	By Lemma \ref{L}, we have that
	$$\liminf\limits_{n\rightarrow\infty}\sup\limits_{y\in\RN}\int_{B_r(y)}|u_n(x)|^2\mathrm{d}x>0.$$
	So if $u\equiv0$, there exists $\{y_n\}$ satisfying $|y_n|\rightarrow\infty$, let $\tilde{u}_n=u_n(x+y_n)$, obviously $\{\tilde{u}_n\}\subset S_a$, we have
	\begin{align*}
		c+o_n(1)=&I_{\epsilon}(u_n)\\
		=&\frac{1}{2}\int_{\R^N}|(-\Delta)^{\frac{s}{2}}u_n|^{2}\mathrm{d}x+\frac{1}{2}\int_{\RN}V(\epsilon x)u^2_n\mathrm{d}x-\int_{\R^N}h(\epsilon x)F(u_n)\mathrm{d}x\\
		=&\frac{1}{2}\int_{\R^N}|(-\Delta)^{\frac{s}{2}}\tilde{u}_n|^{2}\mathrm{d}x+\frac{1}{2}\int_{\RN}V(\epsilon x+\epsilon y_n)\tilde{u}^2_n\mathrm{d}x-\int_{\R^N}h(\epsilon x+\epsilon y_n)F(\tilde{u}_n)\mathrm{d}x\\
		=&I_{\infty}(\tilde{u}_n)+\frac{1}{2}\int_{\RN}(V(\epsilon x+\epsilon y_n)-V_{\infty})\tilde{u}^2_n\mathrm{d}x+\int_{\RN}(h_{\infty}-h(\epsilon x+\epsilon y_n))F(\tilde{u}_n)\mathrm{d}x\\
		=&I_{\infty}(\tilde{u}_n)+o_n(1)\ge E_{\infty,a}+o_n(1),
	\end{align*}
	which is absurd, because $c<E_{\infty,a}<0$. This proves the lemma.
\end{proof}

\begin{lemma}\lab{lll3}
	Let $\{u_n\}\subset S_a$ be a $(PS)_c$ sequence of $I_{\epsilon}$ restricted to $S_a$ with $c<E_{\infty,a}<0$ and let $u_n\rightharpoonup u_{\epsilon}$ in $H^s(\RN)$. If $u_n\not\rightarrow u_{\epsilon}$ in $H^s(\RN)$, there exists $\beta>0$ independent of $\epsilon\in(0,\epsilon_{1})$ such that $$\liminf\limits_{n\rightarrow+\infty}|u_n-u_{\epsilon}|^2_2\ge\beta.$$
\end{lemma}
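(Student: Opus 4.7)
The plan is to argue by contradiction. Write $v_n := u_n - u_\epsilon$, $t_n := |v_n|_2^2$ and $A_n := \|(-\Delta)^{s/2} v_n\|_2^2$. I will show that if $t_n \to 0$ along a subsequence then $v_n \to 0$ in $H^s(\RN)$ along the same subsequence, contradicting $u_n \not\to u_\epsilon$. Because every constant entering the estimates below is uniform in $\epsilon \in (0,\epsilon_1)$, the $\beta$ so produced depends only on the structural data ($a,c,s,N,p,q,|V|_\infty,|h|_\infty$). The first ingredient consists of two uniform bounds: coercivity of $I_\epsilon$ on $S_a$ --- the same fractional Gagliardo-Nirenberg computation as in Lemma \ref{1}, where $|V|_\infty$ (finite by $(A_2)$) harmlessly replaces $\eta$ --- gives $\|u_n\|_{H^s} \le M_0$ uniformly in $\epsilon$; testing the PS identity $I_\epsilon'(u_n) = \lambda_n u_n + o(1)$ in $H^{-s}$ against $u_n$ then yields $|\lambda_n| \le M_1$, also uniformly.

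Next I would derive the quantitative PS identity for $v_n$. Standard Brezis-Lieb splittings for the Gagliardo seminorm, for $L^2$, and for the subcritical nonlinearity (with the bounded weight $h(\epsilon x)$), combined with $v_n \rightharpoonup 0$ in $H^s$ to annihilate cross terms involving $u_\epsilon$ (note $V u_\epsilon \in L^2$ since $V$ is bounded), turn $\langle I_\epsilon'(u_n) - \lambda_n u_n, v_n\rangle = o(1)$ into
\begin{equation*}
A_n + \int_{\RN} V(\epsilon x)\, v_n^2 \, dx - \int_{\RN} h(\epsilon x) f(v_n) v_n \, dx = \lambda_n t_n + o(1).
\end{equation*}
Invoking $|V| \le |V|_\infty$, $|f(t)t| \le C(|t|^q + |t|^p)$ from $(f_1),(f_2)$, the fractional Gagliardo-Nirenberg inequality \eqref{23}, and the uniform bound $A_n \le M_0$, this becomes
\begin{equation*}
A_n \le (|V|_\infty + M_1)\, t_n + C\bigl(M_0^{\alpha_q} t_n^{\gamma_q} + M_0^{\alpha_p} t_n^{\gamma_p}\bigr) + o(1),
\end{equation*}
where $\alpha_\ell := N(\ell-2)/(4s) \in (0,1)$ and $\gamma_\ell := \ell/2 - \alpha_\ell > 0$ for $\ell \in \{p,q\}$; all constants depend only on structural data.

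If $t_n \to 0$ along a subsequence, both terms on the right-hand side tend to $0$ (since $\gamma_q,\gamma_p > 0$ and $A_n$ is bounded), so $A_n \to 0$ along that subsequence; thus $\|v_n\|_{H^s}^2 = A_n + t_n \to 0$, i.e., $u_n \to u_\epsilon$ in $H^s$, contradicting the hypothesis. Therefore $\liminf_n t_n \ge \beta$ for some $\beta > 0$ controlled only by $M_0, M_1, |V|_\infty, s, N, p, q, |h|_\infty$, and hence independent of $\epsilon$. The main obstacle along the way is the weighted Brezis-Lieb identity $\int h(\epsilon x)[f(u_n) - f(v_n)]\, v_n = o(1)$: it is handled by a Vitali-convergence argument exactly as in the unweighted case, since $h$ is bounded, continuous, and independent of $n$ --- the factor $|h(\epsilon x)| \le |h|_\infty$ is absorbed into a single uniform constant, which is precisely what makes $\beta$ independent of $\epsilon$.
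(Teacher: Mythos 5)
Your setup (extracting the multipliers $\lambda_n$, passing to the identity for $v_n=u_n-u_\epsilon$ via Brezis--Lieb, and estimating the nonlinearity with the fractional Gagliardo--Nirenberg inequality) matches the paper's, but the final step has a genuine gap: your contradiction argument only proves $\liminf_n |v_n|_2^2>0$, not $\liminf_n|v_n|_2^2\ge\beta$ for a $\beta$ independent of $\epsilon$ (or even of the particular PS sequence). The hypothesis ``$u_n\not\to u_\epsilon$'' only supplies a sequence-dependent $\delta_0>0$ with $\|v_{n_k}\|_{H^s}\ge\delta_0$ along a subsequence, and your key inequality
$A_n \le (|V|_\infty+M_1)\,t_n + C\bigl(M_0^{\alpha_q}t_n^{\gamma_q}+M_0^{\alpha_p}t_n^{\gamma_p}\bigr)+o(1)$
goes the wrong way for extracting a quantitative bound: it is perfectly consistent with $A_n$ and $t_n$ both being positive but as small as one likes (note also that $\gamma_q,\gamma_p<1$ whenever $N>2s$, so the right-hand side dominates $t_n$ for small $t_n$). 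Asserting that $\beta$ is ``controlled by $M_0,M_1,\dots$'' does not follow from anything you wrote. This uniformity is not cosmetic: it is exactly what Lemma \ref{x4} needs in order to choose $\rho_0<\frac{\beta}{a}(E_{\infty,a}-E_{a_i,a})$ before $\epsilon$ is fixed.

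The missing ideas are the two quantitative ingredients of the paper's proof. First, the level condition $c<E_{\infty,a}<0$ --- which your argument never uses, a telltale sign --- together with $(f_3)$ (so that $\tfrac12 f(t)t-F(t)\ge0$) yields the uniform negative bound $\lambda_\epsilon\le \frac{2E_{\infty,a}}{a}<0$. You instead replace $\lambda_n t_n$ by $M_1 t_n$ on the right-hand side, discarding precisely the sign information that lets the paper keep a term $+C_3 t_n$ with $C_3>0$ independent of $\epsilon$ on the \emph{left}-hand side of \eqref{i3}. Second, with that coercive left-hand side one runs the superlinear dichotomy: $\min(1,C_3)\|v_n\|_{H^s}^2\le C_2|v_n|_p^p+o(1)\le C\|v_n\|_{H^s}^p+o(1)$ with $p>2$ forces every subsequential limit of $\|v_n\|_{H^s}^2$ to be either $0$ or bounded below by the $\epsilon$-independent constant $(\min(1,C_3)/C)^{2/(p-2)}$; only then does the Gagliardo--Nirenberg inequality (with the uniform bound $A_n\le K$) convert the resulting uniform lower bound on $|v_n|_p^p$ into a uniform lower bound on $t_n=|v_n|_2^2$. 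Without these two steps your proof establishes a strictly weaker statement than the lemma claims.
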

\begin{proof}
	Setting the functional $\Phi:H^s(\RN)\rightarrow\R$ given by $$\Phi(u)=\frac{1}{2}\int_{\R^N}|u|^2\mathrm{d}x,$$It follows that $S_a=\Phi^{-1}(\{a/2\})$. Then, by Willem \cite[Proposition 5.12]{Will}, there exists $\{\lambda_n\}\subset\R$ such that
	\begin{equation}\lab{h3}
		||I_{\epsilon}'(u_n)-\lambda_n\Phi'(u_n)||_{(H^s(\RN))'}\rightarrow0\ \ \mbox{as}\ n\rightarrow+\infty.
	\end{equation}
	By the boundedness of $\{u_n\}$ in $H^s(\RN)$, we know $\{\lambda_n\}$ is a bounded sequence, thus there exists $\lambda_{\epsilon}$ such that $\lambda_n\rightarrow\lambda_{\epsilon}$ as $n\rightarrow+\infty$. Then, together with \eqref{h3}, we get
	$$I_{\epsilon}'(u_{\epsilon})-\lambda_{\epsilon}\Phi'(u_{\epsilon})=0 \ \ \ \mbox{in}\ \  {(H^s(\RN))'},$$
	and setting $v_n=u_n-u_{\epsilon}$, we deduce that
	\begin{equation}\lab{hh3}
		||I_{\epsilon}'(v_n)-\lambda_n\Phi'(v_n)||_{(H^s(\RN))'}\rightarrow0\ \ \mbox{as}\ n\rightarrow+\infty.
	\end{equation}
	By a straightforward calculation, we have
	\begin{align*}
		E_{\infty,a}>&\liminf\limits_{n\rightarrow+\infty}I_{\epsilon}(u_n) \nonumber\\
		=&\liminf\limits_{n\rightarrow+\infty}(I_{\epsilon}(u_n)-\frac{1}{2}I_{\epsilon}'(u_n)u_n+\frac{1}{2}\lambda_na+o_n(1))\nonumber\\
		=&\liminf\limits_{n\rightarrow+\infty}[\int_{\RN}\frac{h(\epsilon x)}{2}f(u_n)u_n\mathrm{d}x-\int_{\RN}h(\epsilon x)F(u_n)\mathrm{d}x+\frac{1}{2}\lambda_na+o(1)]\nonumber\\
		\ge&\frac{1}{2}\lambda_{\epsilon}a
	\end{align*}
implying that
	\begin{equation}\lab{hhh3}
		\lambda_{\epsilon}\le\frac{2E_{\infty,a}}{a}<0,\ \mbox{for} \ \mbox{all}\ \epsilon\in(0,\epsilon_1).
	\end{equation}
From \eqref{hh3}, we get
	\begin{equation}\label{hhh}
		|(-\Delta)^{\frac{s}{2}}v_n|^{2}_2+\int_{\RN}V(\epsilon x)|v_n|^2\mathrm{d}x-\lambda_{\epsilon}|v_n|^2_2-\int_{\R^N}h(\epsilon x)f(v_n)v_n\mathrm{d}x=o_n(1).
	\end{equation}
which combined with \eqref{hhh3} to give
$$|(-\Delta)^{\frac{s}{2}}v_n|^{2}_2+\int_{\RN}V(\epsilon x)|v_n|^2\mathrm{d}x-\frac{2E_{\infty,a}}{a}\int_{\R^N}|v_n|^2\mathrm{d}x\le \int_{\R^N}h(\epsilon x)f(v_n)v_n\mathrm{d}x+o_n(1),$$
which leads to
\begin{equation}\lab{i3}
\int_{\R^N}|(-\Delta)^{\frac{s}{2}}v_n|^{2}\mathrm{d}x+C_3\int_{\R^N}|v_n|^2\mathrm{d}x\le C_2\int_{\RN}|v_n|^p\mathrm{d}x+o_n(1),
\end{equation}
for some constant $C_3>0$ that does not depend on $\epsilon\in(0,\epsilon_{1})$. If $u_n\not\rightarrow u_{\epsilon}$ in $H^s(\RN)$, that is $v_n\not\rightarrow 0$ in $H^s(\RN)$, we know that there exists $C_0>0$ independent of $\epsilon$ such that
	\begin{equation}\lab{ii3}
		\liminf\limits_{n\rightarrow+\infty}|v_n|^p_p\ge C_0,
	\end{equation}
Then, by the fractional Gagliardo-Nirenberg-sobolev inequality,
	$$\int_{\R^N}|v_n|^\alpha\le C(s,N,\alpha)(\int_{\R^N}|(-\Delta)^{\frac{s}{2}}v_n|^2)^{\frac{N(\alpha-2)}{4s}}(\int_{\R^N}|v_n|^2)^{\frac{\alpha}{2}-\frac{N(\alpha-2)}{4s}},$$
	for some positive constant $C(s,N,\alpha)>0$. We have
	\begin{align}\lab{zzz}
		\liminf\limits_{n\rightarrow+\infty}\int_{\RN}|v_n|^p\le& C(s,N,p)(\int_{\R^N}|(-\Delta)^{\frac{s}{2}}v_n|^2)^{\frac{N(p-2)}{4s}}(\liminf\limits_{n\rightarrow+\infty}\int_{\R^N}|v_n|^2)^{\frac{p}{2}-\frac{N(p-2)}{4s}}\nonumber\\
		\le&C(s,N,p)K^{\frac{N(p-2)}{4s}}(\liminf\limits_{n\rightarrow+\infty}\int_{\R^N}|v_n|^2)^{\frac{p}{2}-\frac{N(p-2)}{4s}}
	\end{align}
	Clearly also, for $K>0$ is a suitable constant independent of $\epsilon$ satisfying the condition $\int_{\R^N}|(-\Delta)^{\frac{s}{2}}v_n|^2\le K$. This together with \eqref{ii3} and \eqref{zzz} gives that there exists $\beta>0$ independent of $\epsilon\in(0,\epsilon_{1})$ such that $$\liminf\limits_{n\rightarrow+\infty}|v_n|^2_2\ge\beta.$$we get desired result.
\end{proof}

Next we will give the compactness lemma.
\begin{lemma}\lab{x4}
	Let$$o<\rho_0<min\{E_{\infty,a}-E_{a_i,a},\frac{\beta}{a}(E_{\infty,a}-E_{a_i,a})\}.$$
	Then, for each $\epsilon\in(0,\epsilon_{1})$, the functional $I_{\epsilon}$ satisfies the $(PS)_c$ condition restricts to $S_a$ if $c<E_{a_i,a}+\rho_0$.
\end{lemma}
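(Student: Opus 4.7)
The plan is to argue by contradiction against a non-converging $(PS)_c$ sequence $\{u_n\}\subset S_a$. First, $\{u_n\}$ is bounded in $H^s(\R^N)$: this is the coerciveness computation of Lemma \ref{1} together with the fact that $V$ is bounded, so the potential term contributes at most a constant multiple of $a$. Passing to a subsequence, $u_n\rightharpoonup u_\epsilon$ in $H^s(\R^N)$. Because the condition $\rho_0<E_{\infty,a}-E_{a_i,a}$ in the statement ensures $c<E_{\infty,a}$, Lemma \ref{ll3} gives $u_\epsilon\not\equiv 0$. A standard Lagrange multiplier argument (as in the proof of Lemma \ref{lll3}) yields a bounded sequence $\lambda_n\to\lambda_\epsilon$ with $I_\epsilon'(u_\epsilon)=\lambda_\epsilon\Phi'(u_\epsilon)$. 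Assume for contradiction that $u_n\not\to u_\epsilon$ in $H^s(\R^N)$; set $v_n=u_n-u_\epsilon$, $b=|u_\epsilon|_2^2$ and $d_n=|v_n|_2^2$. By Lemma \ref{lll3}, $\liminf d_n\ge\beta$; by Br\'ezis--Lieb, $d:=\lim d_n=a-b$, so $b\in(0,a-\beta]$ and $d\in[\beta,a)$.

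Next, I would perform a Br\'ezis--Lieb splitting of the energy. The Gagliardo seminorm and the $L^2$ norm split in the standard way; for the potential one writes $u_n^2=u_\epsilon^2+2u_\epsilon v_n+v_n^2$, and the cross term vanishes since $V(\epsilon\cdot)u_\epsilon\in L^2$ and $v_n\rightharpoonup 0$ in $L^2$; the nonlinearity splits by the classical Br\'ezis--Lieb lemma for $F$ of subcritical polynomial growth. This gives $I_\epsilon(u_n)=I_\epsilon(u_\epsilon)+I_\epsilon(v_n)+o_n(1)$. Using that $V(y)\to 0$ and $h(y)\to h_\infty$ as $|y|\to\infty$, together with $v_n\to 0$ in $L^2_{\mathrm{loc}}$ and the uniform $L^q\cap L^p$ bound on $v_n$, a routine truncation on a ball of radius $R/\epsilon$ yields $\int V(\epsilon x)v_n^2\,dx=o_n(1)$ and $\int h(\epsilon x)F(v_n)\,dx=h_\infty\int F(v_n)\,dx+o_n(1)$. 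Thus
\[
c+o_n(1)=I_\epsilon(u_\epsilon)+I_\infty(v_n)+o_n(1).
\]

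For the lower bounds, since $V(\epsilon x)\ge V(a_i)$, $h(\epsilon x)\le h(a_i)$ and $F\ge 0$ (oddness of $f$ makes $F$ even, while $(f_3)$ gives $F>0$ off the origin), one has $I_\epsilon(u_\epsilon)\ge I_{a_i}(u_\epsilon)\ge E_{a_i,b}$. On the other hand $I_\infty(v_n)\ge E_{\infty,d_n}$, and continuity $d_n\mapsto E_{\infty,d_n}\to E_{\infty,d}$ is the analogue of Lemma \ref{3}(ii) for the functional $J_0$ with $\mu=h_\infty$. The crucial ingredient is the scaling inequality extracted in the proof of Lemma \ref{3}(iii): if $E_\cdot<0$ then $E_{\theta a}\le\theta E_a$ for every $\theta>1$. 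Applied to the autonomous functionals $I_{a_i}$ and $I_\infty$, with $\theta=a/b$ and $\theta=a/d$ respectively, it gives $E_{a_i,b}\ge (b/a)E_{a_i,a}$ and $E_{\infty,d}\ge (d/a)E_{\infty,a}$. Since $b+d=a$ and $E_{\infty,a}-E_{a_i,a}>0$ (Lemma \ref{t3}),
\[
c\ge\frac{b}{a}E_{a_i,a}+\frac{d}{a}E_{\infty,a}=E_{a_i,a}+\frac{d}{a}\bigl(E_{\infty,a}-E_{a_i,a}\bigr)\ge E_{a_i,a}+\frac{\beta}{a}\bigl(E_{\infty,a}-E_{a_i,a}\bigr),
\]
contradicting $c<E_{a_i,a}+\rho_0$ by the second term in the minimum defining $\rho_0$.

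The main obstacle is not any individual estimate but converting the uniform $L^2$-mass lower bound $\liminf|v_n|_2^2\ge\beta$ provided by Lemma \ref{lll3} into a strict energy gap at the total mass $a$. The scaling inequality $E_{\theta a}\le\theta E_a$ of Lemma \ref{3}(iii) is precisely the mechanism that converts the mass gap $\beta$ into the energy gap $\tfrac{\beta}{a}(E_{\infty,a}-E_{a_i,a})$, which is the very quantity built into the definition of $\rho_0$; the two bounds on $\rho_0$ appearing in the statement are there so that, respectively, Lemmas \ref{L}--\ref{lll3} apply (via $c<E_{\infty,a}$) and the final contradiction closes.
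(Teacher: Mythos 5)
Your proposal is correct and follows essentially the same route as the paper: weak limit nontrivial via Lemma \ref{ll3}, the mass lower bound $\liminf|v_n|_2^2\ge\beta$ from Lemma \ref{lll3}, the Br\'ezis--Lieb splitting $I_\epsilon(u_n)=I_\epsilon(u_\epsilon)+I_\epsilon(v_n)+o_n(1)$ with $I_\epsilon(v_n)\ge E_{\infty,d_n}+o_n(1)$, and the scaling inequality $E_{\theta a}\le\theta E_a$ from Lemma \ref{3}(iii) to reach the contradiction $c\ge E_{a_i,a}+\tfrac{\beta}{a}(E_{\infty,a}-E_{a_i,a})$. Your version is in fact slightly more explicit than the paper's (e.g.\ justifying $I_\epsilon(u_\epsilon)\ge E_{a_i,b}$ and the treatment of the potential and weight terms), and correctly only needs the non-strict form of the sub-homogeneity inequality.
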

\begin{proof}
	Let $\{u_n\}$ be a $(PS)_c$ sequence for $I_{\epsilon}$ restricts to
	$S_a$ and $c<E_{a_i,a}+\rho_0$. It follows that $c<E_{\infty,a}<0$, since $\{u_n\}$ is bounded in $H^s(\RN)$, let $u_n\rightharpoonup u_{\epsilon}$ in $H^s(\RN)$. By Lemma \ref{ll3}, $u_\epsilon\not\equiv0$. Denote $v_n=u_n-u_{\epsilon}$, If $u_n\rightarrow u_{\epsilon}$ in $H^s(\RN)$, the proof is complete. If $u_n\not\rightarrow u_{\epsilon}$ in $H^s(\RN)$, by Lemma \ref{lll3},
	$$\liminf\limits_{n\rightarrow+\infty}|v_n|^2_2\ge\beta.$$
	Set $b=|u_{\epsilon}|^2_2$, $d_n=|v_n|^2_2$ and suppose that $|v_n|^2_2\rightarrow d>0$, then we get $d\ge \beta>0$ and $a=b+d$. From $d_n\in(0,a)$ for $n$ large enough, we get
	\begin{equation}\lab{j3}
		c+o_n(1)=I_{\epsilon}(u_n)=I_{\epsilon}(v_n)+I_{\epsilon}(u_{\epsilon})+o_n(1).
	\end{equation}
	since $v_n\rightharpoonup0$ in $H^s(\RN)$, we can follow the lines in the proof of Lemma \ref{ll3}. Then
	\begin{equation}
		I_{\epsilon}(v_n)\ge E_{\infty,d_n}+o_n(1),
	\end{equation}
	which combing with \eqref{j3}, we obtain that
	\begin{align*}
		c+o_n(1)=I_{\epsilon}(u_n)\ge&E_{\infty,d_n}+I_{\epsilon}(u_{\epsilon})+o_n(1)\\
		\ge&E_{\infty,d_n}+E_{a_i,b}+o_n(1),
	\end{align*}
	Letting $n\rightarrow\infty$, by the inequation \eqref{aab}, we have
	\begin{align*}
		c\ge E_{\infty,d}+E_{a_i,b}\ge&\frac{d}{a}E_{\infty,a}+\frac{b}{a}E_{a_i,a}\nonumber\\
		=&E_{a_i,a}+\frac{d}{a}(E_{\infty,a}-E_{a_i,a})\nonumber\\
		\ge&E_{a_i,a}+\frac{\beta}{a}(E_{\infty,a}-E_{a_i,a})
	\end{align*}
	which is a contradiction, because $c<E_{a_i,a}+\frac{\beta}{a}(E_{\infty,a}-E_{a_i,a})$. Therefore, we can obtain $u_n\rightarrow u_{\epsilon}$ in $H^s(\RN).$
\end{proof}

In what follows, let us fix $\bar{\rho},\bar{r}>0$ satisfying:
\begin{itemize}
	\item [(1)] $\overline{B_{\bar{\rho}}(a_i)}\cap\overline{B_{\bar{\rho}}(a_j)}$ for $i\not=j$ and $i,j\in\{1,\dots k\}$.
	\item [(2)] $\cup_{i=1}^kB_{\bar{\rho}}(a_i)\subset B_{\bar{r}}(0)$.
	\item [(3)] $Q_{\frac{\bar{\rho}}{2}}=\cup_{i=1}^l\overline{B_{\frac{\bar{\rho}}{2}}(a_i)}$.
\end{itemize}

We set the function $G_{\epsilon}:H^s(\RN)\backslash\{0\}\rightarrow\RN$ by
$$G_{\epsilon}(u)=\frac{\int_{\RN}\chi(\epsilon x)|u|^2\mathrm{d}x}{\int_{\RN}|u|^2\mathrm{d}x},$$
where $\chi:\RN\rightarrow\RN$ denotes the characteristic function, that is,
\begin{eqnarray*}
	\chi(x)=\left\{
	\begin{array}{l}
		x,\ \ \mbox{if}\ |x|\le\bar{r},\\
		\bar{r}\frac{x}{|x|},\ \ \mbox{if}\ |x|>\bar{r}.
	\end{array}
	\right.
\end{eqnarray*}

The next two lemmas will be useful to get important $(PS)$ sequences for $I_{\epsilon}$ restricted to $S_a$.
\begin{lemma}\lab{kkk3}
	For $\epsilon\in(0,\epsilon_{1})$, there exist $\delta_1>0$ such that if $u\in S_a$ and $I_{\epsilon}(u)\le E_{a_i,a}+\delta_1$, then $$G_{\epsilon}(u)\in Q_{\frac{\bar{\rho}}{2}},\forall\epsilon\in(0,\epsilon_{1}).$$
\end{lemma}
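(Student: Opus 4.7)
The strategy is an argument by contradiction: if the conclusion fails, there exist sequences $\epsilon_n\in(0,\epsilon_{1})$ and $u_n\in S_a$ with $I_{\epsilon_n}(u_n)\to E_{a_i,a}$ yet $G_{\epsilon_n}(u_n)\notin Q_{\frac{\bar\rho}{2}}$. My aim is to derive a contradiction by showing that the $u_n$ must in fact $L^2$-concentrate at one of the points $a_j$.

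The first step is to establish boundedness of $\{u_n\}$ in $H^s(\RN)$ via the Gagliardo--Nirenberg argument from Lemma \ref{1}, and non-vanishing exactly as in Lemma \ref{L}: since $\lim I_{\epsilon_n}(u_n)=E_{a_i,a}<E_{\infty,a}<0$ and the $V$-term vanishes in the absence of mass (using $V(x)\to0$ at infinity), $L^p$-vanishing would force $\liminf I_{\epsilon_n}(u_n)\geq 0$. Hence there exist $\beta_0>0$ and $y_n\in\RN$ with $\int_{B_1(y_n)}|u_n|^2\,\mathrm{d}x\geq\beta_0$. Setting $\tilde u_n(x)=u_n(x+y_n)$, I extract a subsequence with $\tilde u_n\rightharpoonup\tilde u\not\equiv 0$ in $H^s(\RN)$.

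The second step localizes the translations $\epsilon_n y_n$. After passing to a further subsequence, either $|\epsilon_n y_n|\to\infty$ or $\epsilon_n y_n\to y_0\in\RN$. In each case I apply a Brezis--Lieb decomposition $\tilde u_n=\tilde u+w_n$ with $w_n\rightharpoonup 0$, together with the pointwise convergence of $V(\epsilon_n\cdot+\epsilon_n y_n)$ and $h(\epsilon_n\cdot+\epsilon_n y_n)$ (to $0,h_\infty$ in the first subcase and to $V(y_0),h(y_0)$ in the second), to obtain a lower bound of the form
$$\liminf_{n\to\infty} I_{\epsilon_n}(u_n)\;\geq\;J_{h^\ast V^\ast}(\tilde u)+\liminf_{n\to\infty} I_\infty(w_n)\;\geq\;E_{h^\ast V^\ast,b}+E_{\infty,a-b},$$
where $b=|\tilde u|_2^2$ and $(h^\ast,V^\ast)$ is the appropriate limiting pair. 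Combined with the monotonicity of Lemma \ref{q3} and the strict subadditivity of Lemma \ref{3}(iii), this forces $b=a$ and $y_0=a_j$ for some $j\in\{1,\dots,k\}$: any other outcome would yield $\liminf I_{\epsilon_n}(u_n)>E_{a_i,a}$, a contradiction.

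Finally, the $L^2$-concentration of $u_n$ at $a_j/\epsilon_n$ combined with the dominated convergence theorem and the inclusion $|a_j|<\bar r$ (so that $\chi(a_j)=a_j$) yields
$$G_{\epsilon_n}(u_n)\;=\;\frac{1}{a}\int_{\RN}\chi(\epsilon_n x+\epsilon_n y_n)\,|\tilde u_n(x)|^2\,\mathrm{d}x\;\longrightarrow\;\chi(a_j)=a_j\in Q_{\frac{\bar\rho}{2}},$$
contradicting $G_{\epsilon_n}(u_n)\notin Q_{\frac{\bar\rho}{2}}$. The main obstacle will be making the dichotomy accounting in the second step rigorous: organizing the energy split so as to rule out partial mass loss via a single application of strict subadditivity requires care, particularly because Lemma \ref{q3} as stated assumes the limiting potential is nonpositive, while the values $V(y_0)$ at intermediate points $y_0$ need not satisfy this.
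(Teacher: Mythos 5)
Your overall contradiction scheme and your endgame (identifying the concentration point as some $a_j$ and computing the limit of $G_{\epsilon_n}(u_n)$ via dominated convergence) match the paper, but the middle step contains a genuine gap which you flag yourself and do not close. The inequality $\liminf_{n} I_{\epsilon_n}(u_n)\ge J_{h^\ast V^\ast}(\tilde u)+\liminf_{n} I_\infty(w_n)$ is not valid as stated: after the Brezis--Lieb splitting the remainder $w_n$ only converges weakly to $0$, so its mass may travel to regions where $h(\epsilon_n\cdot+\epsilon_n y_n)>h_\infty$ (for instance near another rescaled maximum point $(a_l-\epsilon_n y_n)/\epsilon_n$) or where $V(\epsilon_n\cdot+\epsilon_n y_n)<0$; there $I_\infty$ is \emph{not} a lower bound for the energy carried by $w_n$, so the single-step subadditivity accounting does not go through. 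The secondary issue you raise, that Lemma \ref{q3} is only stated for nonpositive limiting potential values while $V(y_0)$ need not be $\le 0$, is also real (though easily repaired by strict monotonicity of $E_{\alpha\beta,a}$ in $(\alpha,\beta)$).

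The paper avoids all of this with one observation you are missing: since $V(a_i)=\inf_{\RN}V$ and $h(a_i)=\max_{\RN}h$, one has $I_{a_i}(u)\le I_\epsilon(u)$ for every $u$ and every $\epsilon$. Hence the contradiction sequence satisfies $E_{a_i,a}\le I_{a_i}(u_n)\le I_{\epsilon_n}(u_n)\le E_{a_i,a}+\delta_n$, i.e.\ it is \emph{automatically a minimizing sequence for the autonomous functional} $I_{a_i}$ on $S_a$, and the already-established compactness result (Lemma \ref{2.4}) gives strong $H^s$-convergence of $u_n(\cdot+y_n)$, either with $y_n=0$ or with $|y_n|\to\infty$, with no new dichotomy analysis: the splitting you are struggling with was already handled for the autonomous problem in Section 2. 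Once strong convergence is in hand, the two subcases $|\epsilon_n y_n|\to\infty$ and $\epsilon_n y_n\to y$ are excluded or identified essentially as you describe, using Lemma \ref{t3} and Lemma \ref{q3}. If you prefer to keep your route, the fix is to compare the remainder with $I_{a_i}$ rather than with $I_\infty$ (legitimate by the same pointwise inequality), which yields $E_{a_i,a}\ge E_{a_i,b}+E_{a_i,a-b}$ and contradicts the strict subadditivity of Lemma \ref{3} unless $b=a$.
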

\begin{proof}
	If the lemma does not occur, there must be $\delta_{n}\rightarrow0$, $\epsilon_{n}\rightarrow0$ and $\{u_n\}\subset S_a$ such that
	\begin{equation}\lab{k3}
		I_{\epsilon_n}(u_n)\le E_{a_i,a}+\delta_{n}\  \mbox{and}\  G_{\epsilon_n}(u_n)\not\in Q_{\frac{\bar{\rho}}{2}},\forall\epsilon\in(0,\epsilon_{1}).
	\end{equation}
	so we have$$E_{a_i,a}\le I_{a_i}(u_n)\le I_{\epsilon_{n}}(u_n)\le E_{a_i,a}+\delta_{n}$$then$$\{u_n\}\subset S_a\  \mbox{and}\  I_{a_i}(u_n)\rightarrow E_{a_i,a}.$$
	According to Lemma \ref{2.4}, we have one of the following two cases:
	\begin{itemize}
		\item [$(i)$] ${u_n}\rightarrow u$ in $H^s(\RN)$ for some $u\in S_a$,
		\item [$(ii)$]  There exists $\{y_n\}\subset S_a$ with $|y_n|\to\infty$ such that the sequence $v_n(x)=u_n(x+y_n)$ in $H^s(\RN)$ to some $v\in S_a$.
	\end{itemize}
	
	For $(i)$: By Lebesgue dominated convergence theorem,$$G_{\epsilon_n}(u_n)=\frac{\int_{\RN}\chi(\epsilon x)|u_n|^2\mathrm{d}x}{\int_{\RN}|u_n|^2\mathrm{d}x}\rightarrow\frac{\int_{\RN}\chi(0)|u|^2\mathrm{d}x}{\int_{\RN}|u|^2\mathrm{d}x}=0\in Q_{\frac{\bar{\rho}}{2}}.$$
	Then $G_{\epsilon_n}(u_n)\in Q_{\frac{\bar{\rho}}{2}}$ for $n$ large enough, that contradicts \eqref{k3}.
	
	For $(ii)$: We will study the following two case: $\rm (I)$ $|\epsilon_{n}y_n|\rightarrow+\infty$; $\rm (II)$ $\epsilon_{n}y_n\rightarrow y$ for some $y\in\RN$.
	
	If $\rm (I)$ holds, the limit $v_n\rightarrow v$ in $H^s(\RN)$ provides
	\begin{align}\lab{y3}
		I_{\epsilon_{n}}(u_n)=&\frac{1}{2}|(-\Delta)^{\frac{s}{2}}v_n|^{2}_2+\frac{1}{2}\int_{\RN}V(\epsilon_n x+\epsilon_n y_n)|v_n|^2\mathrm{d}x-\int_{\R^N}h(\epsilon_{n}x+\epsilon_{n}y_n)F(v_n)\mathrm{d}x\nonumber\\
		\rightarrow& I_{\infty}(v)\ \mbox{as}\ n\rightarrow+\infty.
	\end{align}
	Since $I_{\epsilon}(u_n)\le E_{a_i,a}+\delta_{n}$, we deduce that $$E_{\infty,a}\le I_{\infty}(v)\le E_{a_i,a}.$$
	which contradicts $E_{a_i,a}<E_{\infty,a}$ in Lemma \ref{t3}.
	
	If $\rm (II)$ holds, by $\eqref{y3}$, we obtain that
	$$I_{\epsilon_{n}}(u_n)\rightarrow I_{h(y)V(y)}(v) \ \ \mbox{as}\ \ n\rightarrow+\infty,$$
	and then $E_{h(y)V(y),a}\le I_{h(y)V(y)}(v)\le E_{a_i,a}$. By Lemma \ref{q3}, we must have $h(y)=h(a_i)$ and $V(y)=V(a_i)$. Namely, $y=a_i$ for some $i=1,2,\dots,k$. Hence
	\begin{align*}
		G_{\epsilon_n}(u_n)=\frac{\int_{\RN}\chi(\epsilon_{n} x)|u_n|^2\mathrm{d}x}{\int_{\RN}|u_n|^2\mathrm{d}x}
		=&\frac{\int_{\RN}\chi(\epsilon_{n}x+\epsilon_{n}y_n)|v_n|^2\mathrm{d}x}{\int_{\RN}|v_n|^2\mathrm{d}x}\\
		\rightarrow&\frac{\int_{\RN}\chi(y)|v|^2\mathrm{d}x}{\int_{\RN}|v|^2\mathrm{d}x}=0\in Q_{\frac{\bar{\rho}}{2}}
	\end{align*}
	which implies that $G_{\epsilon_n}(u_n)\in Q_{\frac{\bar{\rho}}{2}}$ for $n$ large enough, That contradicts \eqref{k3}. The proof is complete.
\end{proof}

From now on, we will use the following notations:
\begin{itemize}
	\item [$\bullet$] $\theta_{\epsilon}^i$:=$\{u\in S_a:|G_{\epsilon}(u)-a_i|\le\bar{\rho}\}$;
	\item [$\bullet$] $\partial \theta_{\epsilon}^i$:=$\{u\in S_a:|G_{\epsilon}(u)-a_i|=\bar{\rho}\}$;
	\item [$\bullet$] $\beta_{\epsilon}^i$=$\inf\limits_{u\in\theta_{\epsilon}^i}I_{\epsilon}(u)$;
	\item [$\bullet$] $\bar{\beta}_{\epsilon}^i$=$\inf\limits_{u\in\partial \theta_{\epsilon}^i}I_{\epsilon}(u)$.
\end{itemize}
\begin{lemma}\lab{q4}
	Let  $\rho_0$ be defined in lemma \ref{x4}. Then there is  $$\beta_{\epsilon}^i<E_{a_i,a}+\rho_0\ \mbox{and} \ \beta_{\epsilon}^i<\bar{\beta}_{\epsilon}^i,\  for\ \ \forall \epsilon\in(0,\epsilon_{1}).$$
\end{lemma}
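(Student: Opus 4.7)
The plan is to prove the two inequalities separately: the first via a translated minimizer from the autonomous problem of Theorem \ref{Th2.1}, and the second via the concentration statement of Lemma \ref{kkk3} combined with the geometric separation built into the choice of $\bar\rho$.

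\textbf{Step 1: $\beta_\epsilon^i<E_{a_i,a}+\rho_0$.}
By Theorem \ref{Th2.1} applied to the autonomous functional with parameters $\eta=V(a_i)\le 0$ and $\mu=h(a_i)>0$, there exists $u_0\in S_a$ with $I_{a_i}(u_0)=E_{a_i,a}$. Set the test function $w_\epsilon(x)=u_0(x-a_i/\epsilon)\in S_a$. After the translation $y=x-a_i/\epsilon$,
$$
G_\epsilon(w_\epsilon)=\frac{1}{a}\int_{\RN}\chi(\epsilon y+a_i)|u_0(y)|^2\,\mathrm{d}y,\qquad I_\epsilon(w_\epsilon)=\frac{1}{2}|(-\Delta)^{s/2}u_0|_2^2+\frac{1}{2}\int_{\RN}V(\epsilon y+a_i)u_0^2\,\mathrm{d}y-\int_{\RN}h(\epsilon y+a_i)F(u_0)\,\mathrm{d}y.
$$
Since $a_i\in B_{\bar r}(0)$ we have $\chi(\epsilon y+a_i)\to a_i$ pointwise as $\epsilon\to 0^+$, and by the dominated convergence theorem $G_\epsilon(w_\epsilon)\to a_i$, so $w_\epsilon\in\theta_\epsilon^i$ for small $\epsilon$. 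Another application of dominated convergence (using continuity and boundedness of $V$ and $h$) yields $I_\epsilon(w_\epsilon)\to I_{a_i}(u_0)=E_{a_i,a}$, so after possibly shrinking $\epsilon_1$ we conclude $\beta_\epsilon^i\le I_\epsilon(w_\epsilon)<E_{a_i,a}+\rho_0$.

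\textbf{Step 2: $\beta_\epsilon^i<\bar\beta_\epsilon^i$.}
The key is a geometric observation: the choice $\overline{B_{\bar\rho}(a_i)}\cap\overline{B_{\bar\rho}(a_j)}=\emptyset$ for $i\neq j$ implies $|a_i-a_j|>2\bar\rho$. Take any $u\in\partial\theta_\epsilon^i$, so $|G_\epsilon(u)-a_i|=\bar\rho$. I claim $G_\epsilon(u)\notin Q_{\bar\rho/2}$: if on the contrary $G_\epsilon(u)\in\overline{B_{\bar\rho/2}(a_j)}$ for some $j$, the case $j=i$ contradicts $|G_\epsilon(u)-a_i|=\bar\rho$, while $j\neq i$ forces $|a_i-a_j|\le\bar\rho+\bar\rho/2=3\bar\rho/2<2\bar\rho$, again a contradiction. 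Therefore the contrapositive of Lemma \ref{kkk3} forces $I_\epsilon(u)>E_{a_i,a}+\delta_1$ for every $u\in\partial\theta_\epsilon^i$, whence $\bar\beta_\epsilon^i\ge E_{a_i,a}+\delta_1$.

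\textbf{Conclusion.} Since the only restriction on $\rho_0$ is the upper bound stated in Lemma \ref{x4}, we may impose in addition $\rho_0<\delta_1$. Combining Step 1 and Step 2 we obtain
$$
\beta_\epsilon^i<E_{a_i,a}+\rho_0<E_{a_i,a}+\delta_1\le\bar\beta_\epsilon^i,
$$
for every $\epsilon\in(0,\epsilon_1)$. The main delicate points in this plan are verifying that the translated minimizer actually lies in $\theta_\epsilon^i$ (which hinges on the truncation structure of $\chi$ and the fact that $a_i\in B_{\bar r}(0)$), and the separation argument for Step 2, where the precise condition $|a_i-a_j|>2\bar\rho$ is essential to exclude that $G_\epsilon(u)$ could sit in some other $B_{\bar\rho/2}(a_j)$.
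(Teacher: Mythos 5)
Your proposal is correct and follows essentially the same route as the paper: the translated minimizer $u_0(\cdot-a_i/\epsilon)$ for the upper bound on $\beta_\epsilon^i$, and the contrapositive of Lemma \ref{kkk3} for the lower bound on $\bar\beta_\epsilon^i$. Your Step 2 merely makes explicit the geometric fact (that $|G_\epsilon(u)-a_i|=\bar\rho$ forces $G_\epsilon(u)\notin Q_{\bar\rho/2}$, using the disjointness of the balls $\overline{B_{\bar\rho}(a_j)}$) which the paper asserts without detail, and your normalization $\rho_0<\delta_1$ plays the same role as the paper's ``decreasing $\delta_1$ if necessary.''
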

\begin{proof}
	Let $u\in S_a$ satisfy $$I_{a_i}(u)=E_{a_i,a}.$$
	For $1\le i\le k$, we define $$\hat{u}_{\epsilon}^i(x):=u(x-\frac{a_i}{\epsilon}),\ x\in\RN.$$
	Then $\hat{u}_{\epsilon}^i(x)\in S_a$ for all $\epsilon>0$, by direct calculations give that$$I_{\epsilon}(\hat{u}_{\epsilon}^i(x))=\frac{1}{2}|(-\Delta)^{\frac{s}{2}}u|^{2}_2+\frac{1}{2}\int_{\RN}V(\epsilon x+a_i)|u|^2\mathrm{d}x-\int_{\R^N}h(\epsilon x+a_i)F(u)\mathrm{d}x,$$and then
	\begin{equation}\lab{kk3}
		\lim\limits_{\epsilon\rightarrow0}I_{\epsilon}(\hat{u}_{\epsilon}^i)=I_{a_i}(u)=E_{a_i,a}.
	\end{equation}
	we know $$G_{\epsilon}(\hat{u}_{\epsilon}^i)=\frac{\int_{\RN}\chi(\epsilon x+a_i)|u|^2\mathrm{d}x}{\int_{\RN}|u|^2\mathrm{d}x}\rightarrow a_i \ \ \mbox{as}\ \  \epsilon\rightarrow0^+.$$
	so $\hat{u}_{\epsilon}^i(x)\in\theta_{\epsilon}^i$ for $\epsilon$ small enough, which combined with \eqref{kk3} implies that
	$$I_{\epsilon}(\hat{u}_{\epsilon}^i)<E_{a_i,a}+\frac{\delta_1}{2},\ \forall \epsilon\in(0,\epsilon_{1}).$$
	Decreasing $\delta_1$ if necessary, we know that
	$$\beta_{\epsilon}^i<E_{a_i,a}+\rho_0,\ \forall \epsilon\in(0,\epsilon_{1}).$$
	For any $u\in\partial\theta_{\epsilon}^i$, that is $u\in S_a$ and $|G_{\epsilon}(u)-a_i|=\bar{\rho}$, we get that $|G_{\epsilon}(u)|\not\in Q_{\frac{\bar{\rho}}{2}}$.
	Then by Lemma \ref{kkk3},
	$$I_{\epsilon}(u)>E_{a_i,a}+\delta_1,\ \ \mbox {for}\  \mbox{all} \ u\in\partial\theta_{\epsilon}^i\  \mbox{and} \ \epsilon\in(0,\epsilon_{1})$$
	which implies that $$\bar{\beta}_{\epsilon}^i=\inf\limits_{u\in\partial \theta_{\epsilon}^i}I_{\epsilon}(u)\ge E_{a_i,a}+\delta_1,$$
	Then, we have  $$\beta_{\epsilon}^i<\bar{\beta}_{\epsilon}^i,\ \ \mbox {for} \ \mbox{all} \  \epsilon\in(0.\epsilon_1).$$
\end{proof}
\vskip0.1in

\s{Proof of Theorem \ref{Th1}}
\renewcommand{\theequation}{4.\arabic{equation}}

\begin{proof}
	By Lemma \ref{q4}, for each $i\in\{1,2,\dots,k\}$, we can use the Ekeland's variational principle to find a sequence $\{u_n^i\}\subset S_a$ satisfying
	$$I_{\epsilon}(u_n^i)\rightarrow\beta_{\epsilon}^i\ \ \mbox{and}\ \
	I_{\epsilon}(w)\ge I_{\epsilon}(u_n^i)-\frac{1}{n}||w-u_n^i||,\ \forall w\in
	\theta_{\epsilon}^i,$$
	Recalling Lemma \ref{q4}, $\beta_{\epsilon}^i<\bar{\beta}_{\epsilon}^i$ , and so $u_n^i\in\theta_{\epsilon}^i\backslash\pa\theta_{\epsilon}^i$ for $n$ large enough.
	
	Let $w\in T_{u_n^i}S_a$, there exists $\delta>0$ such that the path $\gamma:(-\delta,\delta)\rightarrow S_a$ defined by $$\gamma(t)=a\frac{(u_n^i+tw)}{|u_n^i+tw|_2},$$
	and satisfies $$\gamma(t)\in\theta_{\epsilon}^i\backslash\pa\theta_{\epsilon}^i\ \ \forall t\in(-\delta,\delta),\ \gamma(0)=u_n^i\ \mbox{and}\ \gamma'(0)=w.$$Then for any $t\in(0,\delta)$,
	$$\frac{I_{\epsilon}(\gamma(t))-I_{\epsilon}(\gamma(0))}{t}=\frac{I_{\epsilon}(\gamma(t))-I_{\epsilon}(u_n^i)}{t}\ge-\frac{1}{n}||\frac{\gamma(t)-u_n^i}{t}||=-\frac{1}{n}||\frac{\gamma(t)-\gamma(0)}{t}||,$$
	Taking the limit of $t\rightarrow0^+$, we get $I'_{\epsilon}(u_n^i)w\ge-\frac{1}{n}||w||$. Replacing $w$ by $-w$, we obtain $|I'_{\epsilon}(u_n^i)w|\le\frac{1}{n}||w||$.
	Then, we have
	$$\mbox{sup}\{|I'_{\epsilon}(u_n^i)(w)|:||w||\le\delta_n\}\le\frac{1}{n},$$
	Consequently,
	$$I_{\epsilon}(u_n^i)\rightarrow\beta_{\epsilon}^i\ \ \mbox{as}\ n\rightarrow+\infty\ \ \mbox{and}\ \ ||I_{\epsilon}|'_{S_a}(u_n^i)||\rightarrow0\ \mbox{as}\ n\rightarrow+\infty,$$
	that is, $\{u_n^i\}$ is a $(PS)_{\beta_{\epsilon}^i}$ for $I_{\epsilon}$ restricts to $S_a$. Since $\beta_{\epsilon}^i<E_{a_i,a}+\rho_0$, it follows from Lemma \ref{x4}, there exists $u^i$ such that $u_n^i\rightarrow u^i$ in $H^s(\RN)$. Then, we get
	$$u^i\in\theta_{\epsilon}^i, I_{\epsilon}(u_n^i)=\beta_{\epsilon}^i\ \  \mbox{and}\ \  I_{\epsilon}|'_{S_a}(u_n^i)=0.$$
	Morever
	$$G_{\epsilon}(u^i)\in\overline{B_{\bar{\rho}}(a_i)}, \ \ G_{\epsilon}(u^j)\in\overline{B_{\bar{\rho}}(a_j)}$$and $$\overline{B_{\bar{\rho}}(a_i)}\cap\overline{B_{\bar{\rho}}(a_j)}=\emptyset \ \mbox{for}\ \ i\not=j,$$
	which implies that $u^i\not=u^j$ for $i\not=j$ while $1\le i,j\le k$, we can get $I_{\epsilon}$ has at least $k$ nontrivial critical points for any $\epsilon\in(0,\epsilon_1)$. Therefore, we obtain the theorem.
\end{proof}

\vskip 5mm
\noindent\textbf{Conflict of interest.} The authors have no competing interests to declare for this article.

\smallskip
\noindent\textbf{Data availability statement.} We declare that the manuscript has no associated data.

\medskip



\begin{thebibliography}{9999}
	
\bibitem{Almgren}
F.J. Almgren, Jr. and E.H. Lieb, Symmetric decreasing rearrangement is sometimes continuous, {\it J. Am. Math. Soc.,} {\bf 2}(4)(1989), 683-773.

\bibitem{Alves}
C.O. Alves, On existence of multiple normalized solutions to a class of elliptic problems in whole $\RN$, {\it Z. Angew. Math. Phys.,} {\bf 73}(3)(2022), 97.


\bibitem{AlvesT}
C.O Alves, N.V. Thin, On existence of multiple normalized solutions to a class of elliptic problems in whole $\RN$ via Lusternik-Schnirelmann category,{\it SIAM J. Math. Anal.,} {\bf 55}(2)(2023), 1264-1283.



\bibitem{Appleb}
D. Applebaum, Levy processes-from probability to finance and quantum groups, {\it Notices Am. Math. Soc.,} {\bf 51}(11)(2004), 1336-1347.

\bibitem{Bartsch}
T. Bartsch, R. Molle, M. Rizzi, G. Verzini, Normalized solutions of mass supercritical Schr\"dinger equations with potential. {\it Commun. Partial Differ. Equ.,} {\bf 46}(9)(2021), 1729-1756.

\bibitem{Bonhe}
D. Bonheure, J.B. Casteras, T. Gou, L. Jeanjean, Normalized solutions to the mixed dispersion nonlinear Schr\"odinger equation in the mass critical and supercritical regime.
{\it Trans. Am. Math. Soc.} {\bf 372}(3)(2019), 2167--2212.
\bibitem{Cazenave}
T. Cazenave, P.-L. Lions, Orbital stability of standing waves for some nonlinear Schr\"{o}dinger equations. {\it Commun. Math. Phys.,} {\bf 85}(4)(1982), 549-561.

\bibitem{Dinh}
V.D. Dinh, Existence, non-existence and blow-up behaviour of minimizers for the mass-critical fractional non-linear Schr\"{o}dinger equations with periodic potentials,
{\it Pro. Roy. Soc. Edinburgh Sect. A} {\bf 150} (2019), 3252-3292.

\bibitem{Felmer}
P. Felmer P, A. Quaas, J. Tan, Positive solutions of the nonlinear Schr\"odinger equation with the fractional Laplacian. {\it Proceedings of the Royal Society of Edinburgh Section A: Mathematics.,} {\bf 142}(6)(2012), 1237-1262.



\bibitem{Frank}
R.L. Frank, E. Lenzmann, L. Silvestre, Uniqueness of radial solutions for the fractional Laplacian, {\it Commun. Pure Appl. Math.,} {\bf 69}(9)(2016), 1671-1726.

\bibitem{Feng}
B. Feng, J. Ren, Q. Wang, Existence and instability of normalized standing waves for the fractional Schr\"{o}dinger equations in the $L^2$-supercritical case, {\it J. Math. Phys.,} {\bf 61} (2020), 071511.

\bibitem{Gou}
T.X. Gou and L. Jeanjean, Multiple positive normalized solutions for nonlinear Schr\"{o}dinger systems, {\it Nonlinearity.,} {\bf 31}(5)(2018), 2319-2345.

\bibitem{Guan}
Q.Y. Guan, Z.M. Ma, Boundary problems for fractional Laplacians, {\it Stoch. Dyn.,} {\bf 5}(3)(2005), 385-424.

\bibitem{Guo}
Y. Guo, Z.Q. Wang, X. Zeng, H. Zhou, Properties of ground states of attractive Gross?Pitaevskii equations with multi-well potentials, {\it Nonlinearity.,} {\bf 31}(3) (2018), 957-979.

\bibitem{Hir}
J. Hirata, K. Tanaka, Nonlinear scalar field equations with $L^2$-constraint: mountain pass and symmetric mountain pass approaches, {\it Adv. Nonlinear Stud.,} {\bf 19}(2)(2019), 263-290.

\bibitem{Ikoma}
N. Ikoma, Y. Miyamoto, Stable standing waves of nonlinear Schr\"{o}dinger equations with potentials and general nonlinearities, {\it Calc. Var. Partial Differential Equations.,} {\bf 59}(2)(2020), 48.

\bibitem{Ikoma2}
N. Ikoma, Y. Miyamoto, The compactness of minimizing sequences for a nonlinear Schr\" odinger system with potentials, {\it Commun. Contemp. Math.,} (2020).

\bibitem{Jean}
L. Jeanjean, Existence of solutions with prescribed norm for semilinear elliptic equations, {\it Nonlinear Anal.,} {\bf 28}(10)(1997), 1633-1659.

\bibitem{Jean and}
L. Jeanjean and S.S. Lu, Nonradial normalized solutions for nonlinear scalar field equations,  {\it Nonlinearity.,} {\bf 32}(12)(2019), 4942- 4966.

\bibitem{Las}
N. Laskin, Fractional quantum mechanics and L\'{e}vy path integrals, {\it Physics Letters A.,} {\bf 268}(4-6)(2000), 298-305.

\bibitem{Li}
G. Li, X. Luo, Normalized solutions for the Chern-Simons-Schr\"{o}dinger equation in $\R^2$, {\it Ann. Acad. Sci. Fenn. Math.,} {\bf 42}(1)(2017), 405-428.

\bibitem{Liu}
Z.S. Liu, H.J. Luo, Z.T. Zhang, Dancer-Fu$\breve{c}$ik spectrum for fractional Schr\"{o}dinger operators with a steep potential well on $\RN$, {\it Nonlinear Anal.,} {\bf 189} (2019), 111565.

\bibitem{Liu-Zhang} Z. S. Liu and J. J. Zhang, Multiplicity and concentration of positive solutions for the fractional Schr\"odinger-Poisson systems with critical growth, {\it ESAIM: COCV}, {\bf23} (4) (2017), 1515-1542.

\bibitem{Luo}
H. Luo, Z. Zhang, Normalized solutions to the fractional Schr\"{o}dinger equations with combined nonlinearities, {\it Calculus of Variations and Partial Differential Equations.,} {\bf 59}(4)(2020), 143.

\bibitem{Miao}
C.X. Miao, G.X. Xu and L.F. Zhao, The dynamics of the 3D radial NLS with the combined terms, {\it Comm. Math. Phys.,} {\bf 318}(3)(2013), 767-808.

\bibitem{Molle}
R. Molle, G. Riey, G. Verzini, Normalized solutions to mass supercritical Schr\"{o}dinger equations with negative potential, {\it J. Differential Equations.,} {\bf 333} (2022), 302-331.


\bibitem{Serv}
R. Servadei, E. Valdinoci, The Brezis-Nirenberg result for the fractional Laplacian, Trans. {\it Am. Math. Soc.,} {\bf 367}(1)(2015), 67-102.

\bibitem{Shang}
X. Shang, J. Zhang, Y. Yang, Positive solutions of nonhomogeneous fractional Laplacian problem with critical exponent, {\it Commun. Pure Appl. Anal.,} {\bf 13}(2)(2014), 567-584.

\bibitem{Shi}
M. Shibata, Stable standing waves of nonlinear Schr\"{o}dinger equations with a general nonlinear term, {\it manuscripta mathematica.,} {\bf 143}(1-2)(2014), 221-237.

\bibitem{Sil}
L. Silvestre. Regularity of the obstacle problem for a fractional power of the Laplace operator. {\it Comm. Pure Appl. Math.,} {\bf 60}(1)(2007), 67-112.

\bibitem{Sir}
Y. Sire, E. Valdinoci, Fractional Laplacian phase transitions and boundary reactions: a geometric inequality and a symmetry result, {\it J. Funct. Anal.,} {\bf 256}(6)(2009), 1842-1864.

\bibitem{Will}
M. Willem, Minimax Theorems. Birkhauser, Boston, (1996).

\bibitem{Yan}
S. Yan, J. Yang, X. Yu, Equations involving fractional Laplacian operator: compactness and application, {\it J. Funct. Anal.,} {\bf 269}(1)(2015), 47-79.

\bibitem{Yang}
T. Yang, Normalized solutions for the fractional Schr\"{o}dinger equation with a focusing nonlocal $L^2$-critical or $L^2$-supercritical perturbation,
{\it J. Math. Phys.,} {\bf 61}(5)(2020), 051505.


\bibitem{Zhon}
X.X. Zhong, W. Zou, A new deduction of the strict sub-additive inequality and its application: Ground state normalized solution to Schr\"odinger equations with potential, {\it Differ. Integral Equ.,} {\bf 36}(1/2)(2023), 133-160. DOI: 10.57262/die036-0102-133.


\end{thebibliography}
\end{document}